\documentclass{amsart}
\title{Elmendorf constructions for $G$-categories and $G$-posets}

\author{Jonathan Rubin}
\address{University of California Los Angeles,
Los Angeles, CA 90095}
\email{jrubin@math.ucla.edu}

\subjclass[2010]{Primary: 55P91}

\date{\today}
\usepackage{amsmath,amssymb,amsthm,stackrel,tikz,enumitem,mathrsfs,physics}

\renewcommand{\b}[1]{\mathbf{#1}}
\newcommand{\bb}[1]{\mathbb{#1}}
\renewcommand{\t}[1]{\textnormal{#1}}
\newcommand{\s}[1]{\mathscr{#1}}
\renewcommand{\op}{\textnormal{op}}
\newcommand{\ve}{\varepsilon}

\newcommand{\ol}[1]{\overline{#1}}

\theoremstyle{plain}
\newtheorem{thm}{Theorem}[section]
\newtheorem{prop}[thm]{Proposition}
\newtheorem{lem}[thm]{Lemma}
\newtheorem{cor}[thm]{Corollary}

\newtheorem*{thmcomp}{Theorems \protect{\ref{thm:compElm} and \ref{thm:Elmpostop}}}
\newtheorem*{thmElm}{Theorems \protect{\ref{thm:catcoal} and \ref{thm:Elmpos}}}

\theoremstyle{definition}
\newtheorem{defn}[thm]{Definition}
\newtheorem{cex}[thm]{Counterexample}

\theoremstyle{remark}
\newtheorem{rem}[thm]{Remark}

\setcounter{tocdepth}{1}

\begin{document}
\maketitle

\begin{abstract} We introduce new Elmendorf constructions for equivariant categories and posets, and we prove that they are compatible with the classical topological one. Our constructions are more concrete than their model-categorical counterparts, and they give rise to new proofs of the Elmendorf theorems for equivariant categories and posets.
\end{abstract}

\tableofcontents

\section{Introduction}

From the standpoint of homotopy theory, there are many different models for topological spaces. Work of Kan \cite{Kan} and others (cf. \cite{GZ}, \cite{MaySimp}, and \cite{QuillenHA}) established that the homotopy theory of spaces is equivalent to the homotopy theory of simplicial sets. Later on, Thomason \cite{ThomasonCat} proved that the homotopy theory of simplicial sets is equivalent to the homotopy theory of small categories, and more recently, Raptis \cite{Raptis} proved that the homotopy theory of small categories is equivalent to the homotopy theory of posets. All told, there are Quillen equivalences
	\[
	\b{Top} \leftrightarrows \b{sSet} \rightleftarrows \b{Cat} \rightleftarrows \b{Pos}
	\]
between the categories of spaces, simplicial sets, small categories, and posets.

The same is true equivariantly. Let $G$ be a discrete group. Then by work of Bohmann-Mazur-Osorno-Ozornova-Ponto-Yarnall \cite{BMOOPY} and May-Stephan-Zakharevich \cite{MSZ} there are Quillen equivalences
	\[
	\b{Top}^G \leftrightarrows \b{sSet}^G \rightleftarrows \b{Cat}^G \rightleftarrows \b{Pos}^G
	\]
between the corresponding categories of left $G$-actions. Thus, $G$-spaces may be modeled by $G$-simplicial sets, small $G$-categories, and $G$-posets, but there are yet more models. Let $\s{O}_G$ denote the orbit category of $G$, i.e. the category of coset spaces $G/H$ and the $G$-equivariant maps between them. Let
	\[
	\b{Fun}(\s{O}_G^\op,\b{Top}) \leftarrow \b{Top}^G : \Phi
	\]
be the functor that sends a $G$-space $X$ to its presheaf of fixed point subspaces $X^H$. Then the functor $\Phi$ has an inverse
	\[
	C : \b{Fun}(\s{O}_G^\op , \b{Top}) \to \b{Top}^G
	\]
up to weak equivalence. Here, a weak equivalence of $G$-spaces is a $G$-map $f : X \to Y$ such that $f^H : X^H \to Y^H$ is a weak homotopy equivalence for all subgroups $H \subset G$, and weak equivalence of presheaves is a natural transformation $\lambda : \s{X} \Rightarrow \s{Y}$ such that each component of $\lambda$ is a weak homotopy equivalence. In this sense, $G$-spaces are also modeled by topological presheaves over $\s{O}_G$.

The original construction of $C$ is due to Elmendorf \cite{Elmendorf}, and thus the existence of homotopy inverse functors $C :  \b{Fun}(\s{O}_G^\op , \b{Top}) \rightleftarrows \b{Top}^G : \Phi$ is typically called Elmendorf's theorem. He defined $C$ using a two-sided bar construction, and Piacenza \cite{Piacenza} later recast things in model categorical terms, with $C$ being the derived left adjoint to $\Phi$. Elmendorf and Piacenza's constructions have distinct advantages. Piacenza's approach generalizes to simplicial, categorical, and posetal contexts, and it produces model-categorical proofs that $G$-simplicial sets, $G$-categories, and $G$-posets can be modeled by presheaves over $\s{O}_G$ (cf. \cite{BMOOPY}, \cite{MSZ}, and \cite{Stephan}). On the other hand, Elmendorf's construction has the virtue of being simple and explicit. It uses no heavy machinery, and it lends itself well to direct analysis (cf. \cite[\S4]{BHOspectra}).

The purpose of this paper is to give similarly direct Elmendorf constructions for $G$-categories and for $G$-posets. We shall define two functors (cf. \S\ref{subsec:catelm} and \S\ref{subsec:poselm})
	\[
	C : \b{Fun}(\s{O}_G^\op,\b{Cat}) \to \b{Cat}^G	\quad\t{and}\quad
	C : \b{Fun}(\s{O}_G^\op,\b{Pos}) \to \b{Pos}^G
	\]
and then prove that they have the expected properties. These functors are elementary. The $C$ functor for $G$-categories is a certain Grothendieck construction. The $C$ functor for $G$-posets is not simply the restriction of this functor, because it generally outputs $G$-preorders. Therefore we must rectify it. Surprisingly, the natural posetal quotient of a $G$-preorder does not usually have the correct equivariant homotopy type, so we use a version of Milnor's infinite join instead. This join construction gives a functorial posetal resolution of every preorder, and it may be of independent interest beyond this paper. 

These constructions stand in sharp contrast to the model categorical analogues of $C$. The latter involve cofibrant resolution in the relevant presheaf categories, and while this is perfectly fine for theoretical considerations, these resolutions are somewhat inexplicit and sometimes difficult to compute. Our constructions have explicit formulas.

We now state our main results. Fix a discrete group $G$, not necessarily finite, and let $B = \abs{\cdot} \circ N : \b{Cat} \to \b{Top}$ denote the classifying space functor.

\begin{thmcomp}Suppose $\s{C} = \b{Cat}$ or $\b{Pos}$. Then for any orbit presheaf $\s{X} : \s{O}_G^\op \to \s{C}$, there is a zig-zag of weak $G$-equivalences between $CB\s{X}$ and $BC\s{X}$.
\end{thmcomp}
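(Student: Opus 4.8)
The plan is to exhibit both $CB\s{X}$ and $BC\s{X}$ as geometric realizations of a single bisimplicial $G$-space, taken in the two possible orders, so that the comparison reduces to a Fubini-type interchange together with a few standard naturality statements rather than to a direct homotopy argument. I would treat $\s{C} = \b{Cat}$ first. By definition Elmendorf's topological $C$ is a two-sided bar construction weighted by the tautological orbit diagram $\s{J} : \s{O}_G \to \b{Top}^G$, $G/H \mapsto G/H$; thus $CB\s{X} \cong \abs{B_\bullet(B\s{X}, \s{O}_G, \s{J})}$, and since $B\s{X} = \abs{N\s{X}}$ levelwise and realization commutes with finite products, this is one realization of the bisimplicial $G$-space $W_{\bullet\bullet} = B_\bullet(N\s{X}, \s{O}_G, \s{J}^{\b{Cat}})$, where $\s{J}^{\b{Cat}}(G/H) = G/H$ is the orbit viewed as a discrete $G$-category. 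On the other side, $C\s{X}$ is the orbit-weighted Grothendieck construction of \S\ref{subsec:catelm}, and Thomason's theorem furnishes a natural weak equivalence between $N(C\s{X})$ and the bar construction $\operatorname{diag} W_{\bullet\bullet}$; realizing, $BC\s{X}$ becomes the other realization of $W_{\bullet\bullet}$.

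The zig-zag is then assembled from three natural, manifestly $G$-equivariant edges. The first is the Thomason comparison between $N(C\s{X})$ and $\operatorname{diag} W_{\bullet\bullet}$ just described, which is the reason a single homeomorphism does not suffice. The second is the interchange homeomorphism $\abs{\operatorname{diag} W_{\bullet\bullet}} \cong \lVert W_{\bullet\bullet} \rVert$ between the diagonal realization and the iterated one. The third identifies $\lVert W_{\bullet\bullet} \rVert$ with $\abs{B_\bullet(B\s{X}, \s{O}_G, \s{J})}$ using that realization commutes with products; if the definitions force a thick bar on one side and a thin bar on the other, the canonical comparison between them supplies one more edge. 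In every case $G$ acts only through the factor $\s{J}$, so naturality makes each map a map of $G$-spaces.

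It remains to promote these levelwise equivalences to weak $G$-equivalences, i.e. to check that each edge is a weak equivalence on $H$-fixed points for all $H \subset G$. For this I would use that $N$ preserves limits and that realization commutes with fixed points, whence $(B\mathcal{D})^H = B(\mathcal{D}^H)$ for every $G$-category $\mathcal{D}$. The $H$-fixed points of the relevant bar constructions are again bar constructions, now built from the fixed orbit sets $(G/H_0)^K$, and on these the three edges restrict to the corresponding non-equivariant equivalences; the Thomason and interchange statements are natural, so they pass to fixed points without change. This settles the $\b{Cat}$ case.

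For $\s{C} = \b{Pos}$ I would reduce to the categorical case. The posetal $C$ of \S\ref{subsec:poselm} is obtained by applying the infinite-join rectification to the $G$-preorder $P = C\s{X}$ produced by the Grothendieck construction, and $BP \simeq_G CB\s{X}$ already holds by the $\b{Cat}$ argument, since a preorder is a category. It therefore suffices to show that the natural rectification map between the join-rectified poset $JP$ and $P$ becomes a weak $G$-equivalence after applying $B$. This is where I expect the main obstacle to lie: the naive posetal quotient of $P$ does \emph{not} have the correct $H$-fixed homotopy type, so one cannot argue by a single quotient, and the content of the join construction is exactly to repair this. The crux is to compute the $H$-fixed points of the infinite join and to verify that $B\bigl((JP)^H\bigr) \to B(P^H)$ is a weak equivalence for every $H$; non-equivariantly this is the familiar contractibility of joins, but equivariantly one must check that forming $H$-fixed points interacts with the iterated join so as to preserve that contractibility. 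Granting this, composing the rectification equivalence with the $\b{Cat}$ zig-zag produces the required chain of weak $G$-equivalences between $CB\s{X}$ and $BC\s{X}$.
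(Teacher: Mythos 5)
Your $\b{Cat}$ argument is essentially the paper's: it too realizes $CB\s{X}$ and $BC\s{X}$ from a single bisimplicial object, compares them through Thomason's map, and checks fixed points by naturality. The one point you elide is that Thomason's theorem applies to homotopy colimits of $\b{Cat}$-valued diagrams, i.e.\ to one-sided bar constructions, not to the two-sided bar construction $B(N\s{X},\s{O}_G,\s{J})$ you write down; the paper first re-indexes the latter as $B(N\s{X}\circ p,\s{O}_{G,+},*)$ over the category $\s{O}_{G,+}$ of Definition \ref{defn:OG+} (Proposition \ref{prop:Elmhocolim}), and that re-indexing is also what makes your fixed-point step precise, since the $K$-fixed points of both the Grothendieck construction and the homotopy colimit are then literally the same constructions over $(\s{O}_{G,+}^\op)^K$, so that $\eta^K$ is again a Thomason map. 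This is a repair, not a new idea; that half of the proposal is sound.

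The $\b{Pos}$ half has a genuine gap. Your reduction is the right one --- it is exactly the paper's proof of Theorem \ref{thm:Elmpostop}, a zig-zag $CB\s{X} \to BC_{cat}\s{X} \leftarrow BC_{pos}\s{X}$ whose second leg is the projection $B\pi$ --- but you never prove that $\pi : MP \to P$ is a weak $G$-equivalence; you end with ``granting this,'' and your sketch of how it would go mislocates the difficulty. The equivariant part is trivial by construction: $G$ acts on $MP = P\times\bb{N}$ only through $P$, so $(MP)^H = M(P^H)$ on the nose, and $\pi^H$ is just another instance of the nonequivariant projection; there is no subtle interaction between fixed points and the join to analyze. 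All the work is nonequivariant, and it is not ``the familiar contractibility of joins'': for a general preorder $P$ neither $BMP$ nor $BP$ is contractible (if $P$ is discrete, $MP$ is a disjoint union of copies of $\bb{N}$, and $BMP \simeq BP$ is the discrete set $P$), so the statement to prove is that $B\pi$ is an equivalence, not that anything is contractible. The paper proves this as Proposition \ref{prop:Mpieq} via Quillen's Theorem A: the comma category $x\downarrow\pi$ retracts, through a zig-zag of monotone maps, onto the fiber $\pi^{-1}[x]$, which is filtered (any two elements are bounded above by $(x,\max(m,n)+1)$) and hence contractible. Some such argument is indispensable; without it your chain of $G$-equivalences does not close.
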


In the case $\s{C} = \b{Cat}$, this equivalence may be realized by a single map, and in the case $\s{C} = \b{Pos}$, it may be realized by a zig-zag of two maps. Thus, our Elmendorf constructions are direct lifts of the classical construction to the category of small categories and to the category of posets.

Next, let $\Phi$ denote the fixed point functor generically.

\begin{thmElm} Suppose $\s{C} = \b{Cat}$ or $\b{Pos}$. Then the functors
	\[
	C : \b{Fun}(\s{O}_G^\op,\s{C}) \rightleftarrows \s{C}^G : \Phi
	\]
are inverse up to weak equivalence.
\end{thmElm}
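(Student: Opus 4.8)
The plan is to reduce both weak-equivalence statements to the classical topological Elmendorf theorem via the classifying space functor $B$. Two preliminary facts drive the reduction. First, $B$ detects weak equivalences: by construction a morphism $f$ in $\s{C}^G$ is a weak $G$-equivalence precisely when $Bf^H$ is a weak homotopy equivalence for every subgroup $H \subset G$, and likewise a natural transformation in $\b{Fun}(\s{O}_G^\op,\s{C})$ is a weak equivalence precisely when $B$ of each of its components is. Second, $B$ commutes with fixed points: the nerve is computed levelwise as a limit, so $N(Y^H) = (NY)^H$, and since $G$ is discrete geometric realization commutes with $H$-fixed points; hence there is a natural isomorphism $B(Y^H) \cong (BY)^H$, and applied objectwise over $\s{O}_G$ a natural isomorphism $B\Phi \cong \Phi B$ intertwining the categorical (or posetal) fixed-point functor with the topological one. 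Together these facts let one transport statements about $C$ and $\Phi$ in $\s{C}$ to their topological counterparts, the $\Phi$ side up to the isomorphism $B\Phi\cong\Phi B$ and the $C$ side up to the compatibility equivalence of Theorems~\ref{thm:compElm} and~\ref{thm:Elmpostop}.

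Granting this, I would first treat $\Phi C \simeq \mathrm{id}$. The construction of $C$ supplies a natural comparison $\eta_\s{X}\colon \s{X} \Rightarrow \Phi C\s{X}$, whose value at $G/H$ is the canonical map $\s{X}(G/H) \to (C\s{X})^H$. In the case $\s{C}=\b{Cat}$ this map can be analysed directly from the Grothendieck construction—using that it models the homotopy colimit together with a cofinality (Quillen Theorem~A) argument identifying the fixed category—to conclude that each $\eta_{\s{X},G/H}$ is a weak equivalence; alternatively one applies $B$ and invokes the two preliminary facts to match $B\eta_\s{X}$ with the unit of the topological Elmendorf theorem. Either way $\eta_\s{X}$ is a weak equivalence of presheaves.

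The direction $C\Phi \simeq \mathrm{id}$ is the substantive one. Here I would produce the natural comparison $\epsilon_Y\colon C\Phi Y \to Y$ in $\s{C}^G$ and show $B\epsilon_Y$ is a weak $G$-equivalence of spaces. Applying $B$ and the compatibility theorem gives $BC\Phi Y \simeq CB\Phi Y$, and $B\Phi \cong \Phi B$ rewrites the right-hand side as $C\Phi BY$; the topological Elmendorf theorem then identifies $C\Phi BY$ with $BY$ via its counit. The real content is to verify that the compatibility zig-zag is natural enough to intertwine $B\epsilon_Y$ with the topological counit, so that $B\epsilon_Y$ itself—and not merely an abstract zig-zag between source and target—is a weak equivalence.

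The main obstacle will be the posetal case, for two reasons. First, $C$ for $\b{Pos}$ is not the plain Grothendieck construction (which only yields a $G$-preorder) but its infinite-join rectification, so the fixed-point computations feeding both $\eta$ and $\epsilon$ are more intricate. Second, and crucially, one must check that passing to the join resolution preserves the equivariant homotopy type, i.e. that the natural map relating the join construction to the preorder Grothendieck construction is a weak $G$-equivalence after applying $B$. Once this comparison is established the posetal statements reduce to the categorical ones, and thence to topology exactly as above.
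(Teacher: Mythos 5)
Your strategy---reducing both statements to the topological Elmendorf theorem (Theorem \ref{thm:Elmclass}) through $B$, the isomorphism $B\Phi \cong \Phi B$, and the compatibility equivalences of Theorems \ref{thm:compElm} and \ref{thm:Elmpostop}---is genuinely different from the paper's, but it has concrete gaps. The first is that your unit is not a map of presheaves. You take $\eta_{\s{X}} : \s{X} \Rightarrow \Phi C\s{X}$ with component at $G/H$ the canonical map $\s{X}(G/H) \to (C\s{X})^H$, i.e. $x \mapsto (G/H,eH,x)$. This fails naturality: for the orbit map $r_a : G/K \to G/H$, the structure map of $\Phi C\s{X}$ carries $(G/H,eH,x)$ to $(G/H,aH,x)$, while going around the square the other way produces $(G/K,eK,r_a^*x)$, and these objects of $C\s{X}$ are not equal (they are related only by the morphism $(r_a,\t{id})$). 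The paper flags exactly this failure in the remark following Proposition \ref{prop:wePhiC}, and its fix is to orient the comparison the other way: the functor $(C\s{X})^L \to \s{X}(G/L)$, $(G/H,aH,x) \mapsto r_a^* x$, \emph{is} strictly natural in $L$ and $\s{X}$, and the non-natural $x \mapsto (G/L,eL,x)$ serves only as a levelwise homotopy inverse to it.

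The second gap is that the two steps you yourself identify as "the real content" are deferred rather than done. To conclude that $B\t{ev}_Y$ (and not merely some abstract zig-zag between its source and target) is a weak $G$-equivalence, you must verify that Thomason's map intertwines $B\t{ev}_Y$ with Elmendorf's bar-construction counit; this is a genuine computation with Thomason's explicit simplex formula, not a formality, and without it your argument only exhibits a zig-zag of $G$-spaces between $BC\Phi Y$ and $BY$, which does not produce weak equivalences in $\s{C}^G$ between $C\Phi Y$ and $Y$. Likewise, the posetal input you cite---that the resolution $\pi : MC_{cat}\s{X} \to C_{cat}\s{X}$ induces a weak $G$-equivalence on classifying spaces---is a substantive theorem (Theorem \ref{thm:MGeq}, proved via Quillen's Theorem A applied to the comma categories of $\pi$); the posetal-quotient counterexample in \S\ref{sec:posquot} shows such claims can genuinely fail, so it cannot be waved through. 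Finally, it is worth noting that the paper's route avoids the topological theorem entirely and inverts your assessment of which direction is hard: once $\ve : \Phi C \Rightarrow \t{id}$ is shown to be a weak equivalence by the explicit adjunction-style argument ($\ve_L\eta_L = \t{id}$ together with $\t{id} \Rightarrow \eta_L\ve_L$), the direction you call substantive is formal---one checks the identity $\Phi\,\t{ev}_{\s{C}} = \ve_{\Phi\s{C}}$ and uses that $\Phi$ creates weak equivalences (Corollary \ref{cor:fpCat}). That observation eliminates both intertwining verifications your reduction would require.
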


Here, a weak equivalence of categories or posets is a functor $F : \s{D} \to \s{E}$ such that $BF$ is a homotopy equivalence of spaces, a weak equivalence of presheaves is a natural transformation $\lambda : \s{X} \Rightarrow \s{Y}$ such that each component of $\lambda$ is a weak equivalence, and a weak equivalence of $G$-categories or $G$-posets is a $G$-functor $F : \s{D} \to \s{E}$ such that $BF$ is an equivalence of $G$-spaces. Thus, our Elmendorf constructions really are Elmendorf constructions in the usual sense, and they reestablish the equivalence between $\s{C}^G$ and $\b{Fun}(\s{O}_G^\op,\s{C})$ when $\s{C} = \b{Cat}$ or $\b{Pos}$.

We reiterate that the Elmendorf theorems for $G$-categories and $G$-posets were already proven using model-categorical arguments in \cite{BMOOPY} and \cite{MSZ}. The point of this paper is to give concrete formulas for the Elmendorf constructions, and to give new, elementary proofs of the Elmendorf theorems for $G$-categories and $G$-posets. We hope that our techniques will further facilitate the study of equivariant homotopy theory through the lens of $G$-categories and $G$-posets.

\subsection*{Organization} In \S\ref{sec:prelim}, we review some facts about $G$-categories, the Elmendorf theorem, and homotopy colimits. In \S\ref{sec:ElmCat} we introduce the categorical Elmendorf construction and then we prove it has the desired properties. In \S\ref{sec:ElmPos}, we explain how to resolve a $G$-preorder by a $G$-poset, and then we conclude with a discussion of the posetal Elmendorf construction.

\subsection*{Acknowledgements} It is a pleasure to thank Mike Hill, Peter May, and Ang\'{e}lica Osorno for many helpful conversations and for their incisive commentary. This work was partially supported by NSF Grant DMS--1803426.

\section{Preliminaries}\label{sec:prelim}

The goal of this paper is to lift the Elmendorf construction to $G$-categories and to $G$-posets. Our approach is motivated by two observations:
	\begin{enumerate}
		\item{}the Elmendorf construction is a homotopy colimit, and
		\item{}homotopy colimits are modeled by Grothendieck constructions.
	\end{enumerate}
Accordingly, we begin by establishing conventions on $G$-categories, and then we review Elmendorf's theorem and Thomason's homotopy colimit theorem. 

\subsection{$G$-categories} Let $G$ be a discrete group, not necessarily finite. A (small) \emph{$G$-category} is a (small) category $\s{C}$, equipped with a homomorphism $\mu : G \to \b{Aut}(\s{C})$. Thus, for every $g \in G$, we have an invertible functor $g \cdot (-) = \mu(g) : \s{C} \to \s{C}$, and these action maps satisfy the usual associative and unital laws.

A \emph{$G$-functor} $F : \s{C} \to \s{D}$ between $G$-categories is a functor that preserves the action maps, i.e. $[g \cdot (-)] \circ F = F \circ [g \cdot (-)]$ for all $g \in G$. This means $gFx = Fgx$ and $gFf = Fgf$ for every object $x \in \s{C}$, morphism $f \in \s{C}$, and $g \in G$.

A \emph{$G$-natural transformation} between $G$-functors $F,H : \s{C} \rightrightarrows \s{D}$ is a natural transformation $\eta : F \Rightarrow H$ such that $[g \cdot (-)] \circ \eta = \eta \circ [g \cdot (-)]$ as whiskered natural transformations. This means $g\eta_x = \eta_{gx}$ for every $x \in \s{C}$ and $g \in G$.

The collection of all small $G$-categories, $G$-functors, and $G$-natural transformations forms a $2$-category, with componentwise vertical composition and the usual horizontal composition. We write
	\begin{enumerate}[label=(\roman*)]
		\item{}$\b{Cat}^G$ for the $1$-category of all small $G$-categories and $G$-functors,
		\item{}$\underline{\b{Cat}}^G$ for the $2$-category of all small $G$-categories, $G$-functors, and $G$-natural transformations, and 
		\item{}$\b{Fun}^G(\s{C},\s{D})$ for the $1$-category of all $G$-functors and $G$-natural transformations from $\s{C}$ to $\s{D}$.
	\end{enumerate}

We shall frequently consider the fixed point subcategories of a $G$-category $\s{C}$. For any subgroup $H \subset G$, the \emph{$H$-fixed subcategory} $\s{C}^H$ is the subcategory of $\s{C}$ consisting of the $H$-fixed objects and the $H$-fixed morphisms between them. As one might expect, $H$-fixed points are representable. Let $(-)^\t{disc} : \b{Set} \to \b{Cat}$ be the discrete category functor, which sends a set $X$ to the category $X^\t{disc}$, whose object set is $X$, and which has no nonidentity morphisms. Then there is an induced functor $(-)^\t{disc} : \b{Set}^G \to \b{Cat}^G$, and an isomorphism
	\[
	\b{Fun}^G(G/H^\t{disc} , \s{C}) \stackrel{\cong}{\longrightarrow} \s{C}^H ,
	\]
induced by evaluating $G$-functors and $G$-natural transformations at $eH \in G/H^{\t{disc}}$.

The isomorphism above suggests a definition of the fixed point presheaf of $\s{C}$. Since $\underline{\b{Cat}}^G$ is a $2$-category, there is an enriched hom bifunctor
	\[
	\b{Fun}^G(-,-) : (\b{Cat}^G)^\op \times \b{Cat}^G \to \b{Cat},
	\]
which sends a pair of $G$-categories $(\s{C},\s{D})$	to the category $\b{Fun}^G(\s{C},\s{D})$ and a pair of $G$-functors $(F,H)$ to the composition functor $H \circ (-) \circ F$. We thus obtain a composite functor
	\[
	\b{Fun}^G(-,-) \circ [(-)^\t{disc} \times \t{id}] : \s{O}_G^\op \times \b{Cat}^G \to (\b{Cat}^G)^\op \times \b{Cat}^G \to \b{Cat},
	\]
and the transpose
	\[
	\Phi : \b{Cat}^G \to \b{Fun}(\s{O}_G^\op , \b{Cat})
	\]
is the fixed point functor.

For any $G$-category $\s{C}$, the presheaf $\Phi \s{C}$ sends $G/H \in \s{O}_G$ to $\b{Fun}^G(G/H^\t{disc} , \s{C})$ and $f : G/K \to G/H$ to the precomposition map $f^*$. Under the identification $\b{Fun}^G(G/H^\t{disc} , \s{C}) \cong \s{C}^H$, the morphism $r_a : G/K \to G/H$, which maps $eK$ to $aH$, is sent to the multiplication map $\s{C}^K \leftarrow \s{C}^H : a \cdot (-)$. For any $G$-functor $F : \s{C} \to \s{D}$, the corresponding map $\Phi F : \Phi \s{C} \Rightarrow \Phi \s{D}$ is postcomposition by $F$. Under the identification $\b{Fun}^G(G/H^\t{disc} , \s{C}) \cong \s{C}^H$, it corresponds to the restriction of $F$ to $H$-fixed objects and morphisms.

We think of a small $G$-category $\s{C}$ as a model for a $G$-space. To construct a $G$-space from $\s{C}$, we first apply the nerve functor $N : \b{Cat} \to \b{sSet}$ and then we apply the geometric realization functor $\abs{\cdot} : \b{sSet} \to \b{Top}$. We write $B = \abs{\cdot} \circ N$, and refer to $B\s{C}$ as the \emph{classifying space} of $\s{C}$. Homotopical notions for spaces are transported to categories along $B$. In particular, a $G$-functor $F : \s{C} \to \s{D}$ is a weak equivalence if $BF : B\s{C} \to B\s{D}$ is a weak $G$-equivalence, and a weak equivalence $\lambda : \s{X} \Rightarrow \s{Y}$ of presheaves is a natural transformation that is a levelwise weak equivalence. In light of the following proposition, a $G$-functor $F$ is a weak equivalence of $G$-categories if and only if $\Phi F$ is a weak equivalence of presheaves.

\begin{prop}\label{prop:fpsSet} Suppose $G$ is a (possibly infinite) discrete group, $H \subset G$ is a subgroup, and $K$ is a $G$-simplicial set. Then $\abs{K^H}$ is naturally isomorphic to $\abs{K}^H$.
\end{prop}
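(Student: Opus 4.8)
The plan is to realize the isomorphism concretely, via the levelwise inclusion $i : K^H \hookrightarrow K$ of the fixed simplicial subset $(K^H)_n = (K_n)^H$, and to argue cell by cell rather than through any limit-preservation property of $\abs{\cdot}$. First I would check that $K^H$ really is a simplicial subset, i.e. that faces and degeneracies of an $H$-fixed simplex remain $H$-fixed; this is immediate, since each $h \in H$ acts by a simplicial map and hence commutes with all the $d_i$ and $s_i$. Applying $\abs{\cdot}$ then gives a continuous map $\abs{i} : \abs{K^H} \to \abs{K}$. Since $H$ acts trivially on $K^H$ we have $h \circ i = i$ for every $h \in H$, so by functoriality of realization the image of $\abs{i}$ lies in the fixed subspace $\abs{K}^H$. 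The goal is to show that the corestriction $\abs{i} : \abs{K^H} \to \abs{K}^H$ is a homeomorphism.

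Next I would prove that $\abs{i}$ is a bijection onto $\abs{K}^H$ using the canonical normal form for points of a realization: every point of $\abs{K}$ can be written uniquely as $[\sigma,u]$ for a nondegenerate simplex $\sigma$ and an interior point $u$ of the corresponding geometric simplex. Because each $h \in H$ acts by an \emph{automorphism} of $K$, it sends nondegenerate simplices bijectively to nondegenerate simplices while leaving the coordinate $u$ untouched, so that $h \cdot [\sigma,u] = [h\sigma,u]$. Uniqueness of the normal form then shows that $[\sigma,u]$ is $H$-fixed if and only if $h\sigma = \sigma$ for all $h \in H$, i.e. $\sigma \in (K^H)$. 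A short argument using injectivity of the degeneracies shows that such a $\sigma$ is nondegenerate in $K^H$ exactly when it is nondegenerate in $K$, so the $H$-fixed points of $\abs{K}$ are precisely the points $[\sigma,u]$ arising from $\abs{K^H}$. This gives a continuous bijection $\abs{K^H} \to \abs{K}^H$, natural in $K$ (a $G$-map $f : K \to L$ restricts to $f^H$, yielding the evident commuting square).

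To upgrade this to a homeomorphism, I would observe that $\abs{K^H}$ is the subcomplex of $\abs{K}$ spanned by the $H$-fixed nondegenerate simplices. A subcomplex of a CW complex is closed and carries the subspace topology, so the intrinsic CW topology on $\abs{K^H}$ agrees with the topology it inherits as the subset $\abs{K}^H \subset \abs{K}$. Hence the continuous bijection above is in fact a homeomorphism, completing the proof.

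The main obstacle is precisely the phrase ``possibly infinite'' in the hypothesis. For finite $H$ one could simply invoke the fact that $\abs{\cdot}$ (valued in compactly generated spaces) preserves finite limits together with the expression of $(-)^H$ as an intersection of equalizers; but for infinite $H$ the fixed-point functor is an \emph{infinite} limit, which realization need not preserve, so that route is unavailable. The cell-level normal-form analysis circumvents this entirely, since it reasons one simplex at a time through its $H$-stabilizer and is insensitive to the cardinality of $H$. The only remaining point requiring care is the topological comparison in the last paragraph, namely that the subcomplex topology coincides with the subspace topology, which prevents the argument from stalling at a mere continuous bijection.
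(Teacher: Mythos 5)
Your proof is correct, and it is essentially the ``subcomplex of $H$-fixed cells'' argument that the paper explicitly mentions as the underlying idea but deliberately does not carry out (``we give a more categorical argument''). Both proofs hinge on the same combinatorial input: every point of $\abs{K}$ has a unique normal form $[\sigma,u]$ with $\sigma$ nondegenerate and $u$ interior, and since each $h \in H$ acts by a simplicial automorphism it preserves nondegeneracy, so $H$-fixedness of a point forces $H$-fixedness of its nondegenerate simplex. Where you diverge is in how that input is converted into a homeomorphism. You identify $\abs{K}^H$ set-theoretically as the image of $\abs{K^H}$ (your degeneracy-injectivity lemma, showing nondegeneracy in $K^H$ agrees with nondegeneracy in $K$, is exactly the point that makes this work) and then appeal to point-set CW theory: realizations of simplicial inclusions are subcomplex inclusions, and subcomplexes are closed and carry the subspace topology. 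The paper instead runs a universal-property argument: it writes $K^H$ as the equalizer of $\Delta, \mu : K \rightrightarrows \prod_{h \in H} K$, uses that realization preserves equalizers to see $\abs{K^H}$ as an equalizer upstairs, and then uses the normal-form argument to show that any map $f : X \to \abs{K}$ landing in $\abs{K}^H$ equalizes $\abs{\Delta}$ and $\abs{\mu}$, so that $\abs{i}$ has the universal property of $\abs{K}^H$. The trade-off: the paper's route gets the topology for free from the universal property and never needs the (true but nontrivial) fact that simplicial injections realize to closed embeddings, while your route is more elementary and explicit but stands on that point-set fact -- which you correctly flag as the step preventing the argument from stalling at a continuous bijection. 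Both handle the infinite-$H$ obstruction (realization does not commute with infinite products) in the same underlying way, by reducing to one nondegenerate simplex at a time.
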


\begin{cor}\label{cor:fpCat} Suppose $G$ is a (possibly infinite) discrete group, $H \subset G$ is a subgroup, and $\s{C}$ is a $G$-category. Then $B(\s{C}^H)$ is naturally isomorphic to $(B\s{C})^H$.
\end{cor}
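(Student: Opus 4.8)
The plan is to factor $B = \abs{\cdot} \circ N$ and commute the $H$-fixed point functor past each factor separately. Since Proposition~\ref{prop:fpsSet} already tells us that geometric realization commutes with $(-)^H$ on $G$-simplicial sets, the only new thing to establish is that the nerve functor does the same, i.e. that there is a natural isomorphism $N(\s{C}^H) \cong (N\s{C})^H$ of simplicial sets. Granting this, the composite
	\[
	B(\s{C}^H) = \abs{N(\s{C}^H)} \cong \abs{(N\s{C})^H} \cong \abs{N\s{C}}^H = (B\s{C})^H
	\]
supplies the desired natural isomorphism, where the first isomorphism is realization applied to the nerve comparison and the second is Proposition~\ref{prop:fpsSet} applied to the $G$-simplicial set $K = N\s{C}$.

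First I would unwind the nerve comparison levelwise. The $G$-action on $\s{C}$ induces a degreewise $G$-action on $N\s{C}$ by acting objectwise and morphismwise on a string $x_0 \xrightarrow{f_1} \cdots \xrightarrow{f_n} x_n$, and such an $n$-simplex is $H$-fixed precisely when $g f_i = f_i$ for every $g \in H$ and every $i$. Because the action is functorial, $g f_i = f_i$ already forces $g x_{i-1} = x_{i-1}$ and $g x_i = x_i$, so each $f_i$ is an $H$-fixed morphism between $H$-fixed objects, that is, a morphism of $\s{C}^H$. Conversely, every composable string of morphisms in $\s{C}^H$ is an $H$-fixed $n$-simplex of $N\s{C}$. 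Hence in each degree the $n$-simplices of $(N\s{C})^H$ are exactly the composable strings of morphisms in $\s{C}^H$, which are precisely the $n$-simplices of $N(\s{C}^H)$.

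Next I would check that this degreewise bijection commutes with the face and degeneracy operators, which is immediate since both families of operators are given by composing adjacent morphisms or inserting identities, and these operations preserve the property of lying in $\s{C}^H$. Naturality in $\s{C}$ with respect to $G$-functors is equally formal: a $G$-functor $F$ restricts to $F^H : \s{C}^H \to \s{D}^H$, and both $N(F^H)$ and $(NF)^H$ act by applying $F$ to the entries of a composable string, so they agree under the identification. This produces the natural isomorphism $N(\s{C}^H) \cong (N\s{C})^H$.

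I do not expect any serious obstacle here. The only genuinely nontrivial input is the commutation of realization with fixed points, and that subtlety, namely that fixed points of an infinite-dimensional object can fail to be computed levelwise when $G$ is infinite, has already been absorbed into Proposition~\ref{prop:fpsSet}, which we are free to invoke. The one point to keep straight in the nerve step is that an $H$-fixed morphism automatically has $H$-fixed endpoints, so that the $H$-fixed simplices of $N\s{C}$ match the morphisms of $\s{C}^H$ exactly, with no overcounting or undercounting.
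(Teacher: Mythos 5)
Your proof is correct and follows essentially the same route the paper intends: the paper states Corollary~\ref{cor:fpCat} as an immediate consequence of Proposition~\ref{prop:fpsSet}, with the nerve step $N(\s{C}^H) \cong (N\s{C})^H$ left implicit, and your argument simply spells out that levelwise identification (including the key observation that an $H$-fixed morphism automatically has $H$-fixed source and target) before invoking the proposition for the realization step.
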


Proposition \ref{prop:fpsSet} and its corollary seem to be known, but we could not find proofs in the literature. When $G$ is a finite group, they hold because geometric realization preserves finite limits. It takes just a bit more work in the infinite case. The idea is that the space $\abs{K}^H$ is the subcomplex of $H$-fixed cells in $\abs{K}$, but we give a more categorical argument.

\begin{proof}[Proof of Proposition] The $H$-fixed points $K^H$ fit into an equalizer diagram
	\[
	\begin{tikzpicture}
		\node(KH) at (0,0) {$K^H$};
		\node(K) at (2,0) {$K$};
		\node(P) at (4.5,0) {$\prod_{h \in H} K$};
		\path[->]
		(KH) edge [above] node {$i$} (K)
		(K.15) edge [above] node {$\Delta$} (P.174)
		(K.-15) edge [below] node {$\mu$} (P.186)
		;
	\end{tikzpicture}
	\]
where $\Delta$ is the diagonal and the $h$-component of $\mu$ is the multiplication map $h \cdot (-) : K \to K$. Applying geometric realization yields a diagram
	\[
	\begin{tikzpicture}
		\node(KH) at (0,0) {$\abs{K^H}$};
		\node(K) at (2,0) {$\abs{K}$};
		\node(P) at (4.5,0) {$\abs{\prod_{h \in H}K}$};
		\node(P2) at (4.5,-2) {$\prod_{h \in H} \abs{K}$};
		\path[->]
		(KH) edge [above] node {$\abs{i}$} (K)
		(K.10) edge [above] node {$\abs{\Delta}$} (P.176)
		(K.-10) edge [below] node {$\abs{\mu}$} (P.184)
		(K.-40) edge [above right] node {$\Delta$} (P2.155)
		(K.-69) edge [below left] node {$\lambda$} (P2.163)
		(P) edge [right] node {$\kappa$} (P2)
		;
	\end{tikzpicture}
	\]
where the $h$-component of $\lambda$ is the multiplication map $\abs{h \cdot (-)} : \abs{K} \to \abs{K}$, and $\kappa$ is the canonical map. The top row of this diagram is an equalizer, because realization preserves finite limits. The morphism $\kappa$ is not a homeomorphism when $H$ is infinite, but $\kappa \circ \abs{\Delta} = \Delta$, and $\kappa \circ \abs{\mu} = \lambda$. Therefore $\Delta \circ \abs{i} = \lambda \circ \abs{i}$. It remains to show that $\abs{i}$ is an equalizer of $\Delta$ and $\lambda$.

So suppose $X$ is a space and $f : X \to \abs{K}$ is a continuous map such that $\Delta \circ f = \lambda \circ f$. Choose an element $x \in X$. By the uniqueness of the representation of elements of $\abs{K}$ in non-degenerate form (cf. \cite[\S14]{MaySimp}), we may write $f(x) = \abs{k_n,u_n}$, where $k_n \in K_n$ is non-degenerate, $u_n$ is an interior point of $\Delta^n$, and $\abs{k_n,u_n}$ is the class of $(k_n,u_n)$ in $\abs{K}$. Then $\abs{k_n,u_n} = \abs{hk_n,u_n}$ for every $h \in H$, and it follows that $k_n = hk_n$ for all $h \in H$, because $hk_n$ is also nondegenerate. Consequently,
	\[
	\abs{\Delta}(f(x)) =
	\abs{(k_n)_{h \in H},u_n} = \abs{(hk_n)_{h \in H},u_n}
	= \abs{\mu}(f(x)) ,
	\]
so that $\abs{\Delta} \circ f = \abs{\mu} \circ f$. By the universal property of $\abs{i}$, there is a unique morphism $\widetilde{f} : X \to \abs{K^H}$ such that $\abs{i} \circ \widetilde{f} = f$. This proves $\abs{i}$ is an equalizer of $\Delta$ and $\lambda$.
\end{proof}

We now turn to topology.

\subsection{The topological Elmendorf construction}\label{sec:topElm} Let $G$ be a compact Lie group, and let $J : \s{O}_G \to \b{Top}^G$ be the inclusion. Given any presheaf $\s{X} : \s{O}_G^\op \to \b{Top}$, the topological Elmendorf construction applied to $\s{X}$ is the two-sided bar construction
	\begin{align*}
		C\s{X}	&=	B(\s{X},\s{O}_G,J) .
	\end{align*}
This is the realization of the simplicial space, whose space of $q$-simplices is
	\[
	\coprod_{G/H_0,\dots,G/H_q \in \s{O}_G} \s{X}(G/H_q) \times \s{O}_G(G/H_{q-1},G/H_q) \times \cdots \times \s{O}_G(G/H_0,G/H_1) \times G/H_0 .
	\]
From a conceptual standpoint, the functor $C$ is the derived left adjoint to the fixed point functor $\Phi : \b{Top}^G \to \b{Fun}(\s{O}_G^\op,\b{Top})$, so one might hope that it is homotopy inverse to $\Phi$. This is precisely what Elmendorf proved.

\begin{thm}[\protect{\cite[Theorem 1 and its Corollary]{Elmendorf}}]\label{thm:Elmclass} The functors
	\[
	C : \b{Fun}(\s{O}_G^\op,\b{Top}) \rightleftarrows \b{Top}^G : \Phi
	\]
are inverse up to weak equivalence.
\end{thm}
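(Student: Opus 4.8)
The plan is to verify the two natural weak equivalences $\Phi C\s{X} \simeq \s{X}$ and $C\Phi X \simeq X$ separately, reducing both to a single computation of the $H$-fixed points of a two-sided bar construction. The engine of the argument is that taking $H$-fixed points turns the inclusion $J$ into a corepresentable functor: evaluation at $eH$ gives a natural homeomorphism $J(G/K)^H = (G/K)^H \cong \s{O}_G(G/H,G/K)$. Since the $G$-action on $B(\s{Y},\s{O}_G,J)$ is trivial in the presheaf and morphism directions and concentrated on the $J$-factor, and since $(-)^H$ commutes with realization of simplicial $G$-spaces by the topological analogue of Proposition \ref{prop:fpsSet}, we obtain for any presheaf $\s{Y}$ a natural homeomorphism
	\[
	B(\s{Y},\s{O}_G,J)^H \cong B(\s{Y},\s{O}_G,\s{O}_G(G/H,-)) .
	\]

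First I would treat $\Phi C\s{X} \simeq \s{X}$. The displayed identity identifies the $G/H$-component of $\Phi C\s{X}$ with $B(\s{X},\s{O}_G,\s{O}_G(G/H,-))$. Because the left module is corepresentable, this bar construction carries an extra degeneracy built from the identity morphism $\t{id}_{G/H}$, so its augmentation onto $\s{X}(G/H)$ is a simplicial homotopy equivalence, hence a homotopy equivalence after realization. Checking that these augmentations are natural in $G/H$ (that is, compatible with the maps $r_a$) and in $\s{X}$ then assembles them into a natural transformation $\Phi C\s{X} \Rightarrow \s{X}$ that is a levelwise homotopy equivalence, which is precisely a weak equivalence of presheaves.

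For the other direction I would use the canonical augmentation $\ve : C\Phi X = B(\Phi X,\s{O}_G,J) \to X$ induced by the $G$-action on $X$: on $0$-simplices it sends $(x,gK) \in X^K \times G/K$ to $g\cdot x$, which is well defined since $x$ is $K$-fixed, and the simplicial compatibilities are supplied by functoriality of $\Phi X$. To see $\ve$ is a weak $G$-equivalence I would show each $\ve^H$ is a homotopy equivalence. Applying the displayed identity with $\s{Y} = \Phi X$ gives $(C\Phi X)^H \cong B(\Phi X,\s{O}_G,\s{O}_G(G/H,-))$, which the same extra-degeneracy argument contracts onto $(\Phi X)(G/H) = X^H$; one then checks that under this contraction $\ve^H$ becomes the identity of $X^H$. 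Since this holds for every subgroup $H$, the map $\ve$ is a weak $G$-equivalence, completing the proof.

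The hard part will be the foundational bookkeeping in the compact Lie setting rather than any single clever idea. One must justify that $(-)^H$ commutes with realization of the topological bar construction (the analogue of Proposition \ref{prop:fpsSet} for simplicial $G$-spaces), and that the bar constructions involved are sufficiently well-behaved — e.g.\ proper — so that their realizations compute the intended homotopy colimits and the extra-degeneracy contractions survive realization. Once these point-set technicalities and the naturality of the augmentations are settled, the homotopical content reduces entirely to the corepresentability of $J(-)^H$ and the standard contraction of a bar resolution against a representable functor.
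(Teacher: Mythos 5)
This theorem is not proved in the paper but quoted from Elmendorf's \emph{Systems of fixed point sets}, and your proposal correctly reconstructs Elmendorf's own argument: the identification $B(\s{Y},\s{O}_G,J)^H \cong B(\s{Y},\s{O}_G,\s{O}_G(G/H,-))$ via the corepresentability homeomorphism $(G/K)^H \cong \s{O}_G(G/H,G/K)$, followed by the extra-degeneracy contraction onto $\s{Y}(G/H)$, with the counit $C\Phi X \to X$ becoming the identity of $X^H$ on fixed points. The point-set issues you flag (commuting $(-)^H$ past realization of the simplicial $G$-space, properness of the bar construction in the compact Lie setting) are precisely the ones the cited source handles, so your proposal matches the source's approach in both structure and substance.
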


We could try to lift Elmendorf's construction to $G$-categories and $G$-posets by mimicking the preceding bar construction, but this is ill-advised. The bar construction is a colimit, and colimits of categories are usually intractable. Instead, we shall use Thomason's homotopy colimit theorem. We will review Thomason's theorem in the next section, but for now, we note that Thomason's theorem does not quite apply to the Elmendorf construction $C\s{X}$ as written. Therefore we must repackage the data in $B(\s{X},\s{O}_G,J)$.

When $G$ is a discrete group, the spaces $\s{O}_G(G/H,G/K)$ and $G/H$ are discrete for all subgroups $K,H \subset G$. It follows that
	\[
	B_q(\s{X},\s{O}_G,J) 
	\,\,\,\,
	\cong 
	\!\!\!\!\!\!\!\!
	\coprod_{\tiny{\begin{array}{c}
	G/H_q \stackrel{f_q}{\leftarrow} \cdots \stackrel{f_1}{\leftarrow} G/H_0  \t{ in } \s{O}_G\\
	a_0 H_0 \in G/H_0
	\end{array}}}
	\!\!\!\!\!\!\!\!
	\s{X}(G/H_q) .
	\]
Now we move the elements $a_0 H_0 \in G/H_0$ into the indexing category.

\begin{defn}\label{defn:OG+} Let $\s{O}_{G,+}$ be the category whose objects are pairs $(G/H,aH)$, where $G/H \in \s{O}_G$ is an orbit and $aH \in G/H$ is a coset, and whose morphisms $f : (G/H,aH) \to (G/K , bK)$ are $G$-maps $f : G/H \to G/K$ such that $f(aH) = bK$. The group $G$ acts functorially on $\s{O}_{G,+}$ via
	\[
	g \cdot (G/H,aH) = (G/H,gaH)	\quad\t{and}\quad	g \cdot f = f ,
	\]
and there is a forgetful functor $p : \s{O}_{G,+} \to \s{O}_G$.
\end{defn}

We obtain a further homeomorphism
	\[
	\coprod_{\tiny{\begin{array}{c}
	G/H_q \stackrel{f_q}{\leftarrow} \cdots \stackrel{f_1}{\leftarrow} G/H_0 \t{ in } \s{O}_G, \\
	a_0 H_0 \in G/H_0
	\end{array}}}
	\!\!\!\!\!\!\!\!
	\s{X}(G/H_q) \,\,\,\,\,\,\,\,\,\,\,\, \cong \!\!\!\!\!\!\!\!\!\!\!\!\!\!
	\coprod_{\tiny{
	\begin{array}{c}
	(G/H_q,a_q H_q) \stackrel{f_q}{\leftarrow} \cdots \stackrel{f_1}{\leftarrow} (G/H_0,a_0 H_0) \\
	\t{ in } \s{O}_{G,+}
	\end{array}
	}}
	\!\!\!\!\!\!\!\!\!\!\!\!\!\!\!\!\!\!\!\!\!\!\!\!\!\!
	(\s{X} \circ p)(G/H_q, a_q H_q) ,
	\]
and it becomes a $G$-homeomorphism if we let $G$ act by permuting summands. The right hand side is $B_q(\s{X} \circ p , \s{O}_{G,+},*)$, and we arrive at the following conclusion.

\begin{prop}\label{prop:Elmhocolim} Suppose $G$ is a discrete group. For any $\s{X} : \s{O}_G^\op \to \b{Top}$, there is a $G$-homeomorphism between $C\s{X} = B(\s{X},\s{O}_G,J)$ and $B(\s{X} \circ p,\s{O}_{G,+},*)$.
\end{prop}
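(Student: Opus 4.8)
The plan is to upgrade the levelwise $G$-homeomorphisms already exhibited before the statement into an isomorphism of simplicial $G$-spaces, and then to apply geometric realization. Concretely, the two displayed homeomorphisms furnish, for each $q$, a $G$-homeomorphism
\[
\theta_q : B_q(\s{X},\s{O}_G,J) \stackrel{\cong}{\longrightarrow} B_q(\s{X} \circ p , \s{O}_{G,+},*),
\]
so it remains only to check that the family $\{\theta_q\}$ commutes with every face and degeneracy operator and is $G$-equivariant. Since geometric realization is a functor and $G$ acts through simplicial automorphisms, an isomorphism of simplicial $G$-spaces realizes to a $G$-homeomorphism, and this will finish the proof.

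The combinatorial heart of the argument is the bijection underlying $\theta_q$. A $q$-chain $G/H_q \stackrel{f_q}{\leftarrow} \cdots \stackrel{f_1}{\leftarrow} G/H_0$ in $\s{O}_G$ together with a coset $a_0 H_0 \in G/H_0$ determines a unique $q$-chain in $\s{O}_{G,+}$ by setting $a_j H_j = f_j(a_{j-1} H_{j-1})$ for $1 \le j \le q$; conversely, applying $p$ and remembering the basepoint of the initial object recovers the original data. Under this correspondence the fibers agree on the nose, since $p(G/H_q, a_q H_q) = G/H_q$ gives $(\s{X} \circ p)(G/H_q, a_q H_q) = \s{X}(G/H_q)$.

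I would then verify simplicial naturality operator by operator. Inner faces and all degeneracies match immediately, because composition and identities in $\s{O}_{G,+}$ lie over composition and identities in $\s{O}_G$ under $p$. Of the two outer faces, the one contracting the $\s{X}$-end matches because $(\s{X} \circ p)(f_q) = \s{X}(f_q)$, while the one contracting the $J$-end is where the repackaging is genuinely tested: on $B_\bullet(\s{X},\s{O}_G,J)$ it uses $J$ to send the basepoint $a_0 H_0$ to $f_1(a_0 H_0)$ and deletes the initial orbit, whereas on $B_\bullet(\s{X} \circ p,\s{O}_{G,+},*)$ it simply deletes the initial object $o_0 = (G/H_0, a_0 H_0)$ and leaves the one-point rightmost factor untouched. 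These agree precisely because the surviving initial object is $o_1 = (G/H_1, f_1(a_0 H_0))$, whose basepoint is exactly $f_1(a_0 H_0)$. I expect this outer face to be the main point to get right, since it is where the explicit $G/H_0$ factor of $J$ is traded for the enlarged indexing category together with the trivial functor $*$.

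Finally, for $G$-equivariance the key observation is that $\s{X} \circ p$ is strictly $G$-invariant: the $G$-action on $\s{O}_{G,+}$ fixes all morphisms and only translates basepoints, so $p \circ [g \cdot (-)] = p$ and hence $(\s{X} \circ p) \circ [g \cdot (-)] = \s{X} \circ p$. Consequently $g$ acts on $B_q(\s{X} \circ p,\s{O}_{G,+},*)$ by permuting summands along $o_\bullet \mapsto g \cdot o_\bullet$ via the identity on fibers. Because each $f_j$ is a $G$-map, the translated chain $g \cdot o_\bullet$ has cosets $g a_j H_j = f_j \cdots f_1(g a_0 H_0)$, so this permutation corresponds under $\theta_q$ to translating the basepoint $a_0 H_0 \mapsto g a_0 H_0$, which is exactly the $G$-action on $B_q(\s{X},\s{O}_G,J)$. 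Thus each $\theta_q$ is a $G$-map, the $\theta_q$ assemble into a simplicial $G$-homeomorphism, and realizing yields the asserted $G$-homeomorphism between $C\s{X}$ and $B(\s{X} \circ p,\s{O}_{G,+},*)$.
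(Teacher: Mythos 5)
Your proposal is correct and takes essentially the same route as the paper, which obtains the result by exhibiting exactly these levelwise identifications (pushing the basepoint $a_0H_0$ forward along the chain to land in $\s{O}_{G,+}$, with $G$ acting by permuting summands) and then realizing. Your explicit verifications---that the $J$-end outer face matches because the deleted initial object's successor carries the basepoint $f_1(a_0H_0)$, and that equivariance follows from $p \circ [g \cdot (-)] = p$---supply precisely the simplicial and $G$-compatibility checks the paper leaves implicit.
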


The space $B(\s{X} \circ p,\s{O}_{G,+},*)$ is the Bousfield-Kan model of $\t{hocolim}_{\s{O}_{G,+}^\op} (\s{X} \circ p)$  \cite[\S 12.5]{BousfieldKan}, which is precisely what Thomason's theorem concerns.

\subsection{Thomason's homotopy colimit theorem} We now review the Grothendieck construction and Thomason's theorem. Let  $\s{C}$ be a small category, and let $\b{Cat}$ denote the category of small categories. Recall that if $F : \s{C} \to \b{Cat}$ is a functor, then the \emph{Grothendieck construction} on $F$ is the small category $\int_{\s{C}} F$, whose objects are pairs $(c,x)$ such that $c \in \s{C}$ and $x \in Fc$, and whose morphisms $(c,x) \to (d,y)$ are pairs $(f,h)$ such that $f : c \to d$ in $\s{C}$ and $h : Ff(x) \to y$ in $Fd$. Composition is defined by $(f',h') \circ (f,h) = (f' \circ f, h' \circ Ff'(h))$, and $\t{id}_{(c,x)} = (\t{id}_c , \t{id}_x)$.

Thomason's homotopy colimit theorem states that Grothendieck constructions are categorical models for homotopy colimits.

\begin{thm}[\protect{\cite[Theorem 1.2]{Thomasonhocolim}}] Let $F : \s{C} \to \b{Cat}$ be a functor. Then there is a natural homotopy equivalence
	\[
	\eta : \t{hocolim}_{\s{C}} \, NF \stackrel{\simeq}{\longrightarrow} N \Big( \int_{\s{C}} F \Big) ,
	\]
where $N : \b{Cat} \to \b{sSet}$ is the nerve functor.
\end{thm}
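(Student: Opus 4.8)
The plan is to construct a natural comparison map $\eta$ and then to verify that it is a weak equivalence by reducing to the case of free diagrams. Recall that $\t{hocolim}_\s{C} NF$ is, by definition, the diagonal of the Bousfield--Kan simplicial replacement $\mathbb{W}$, with
\[
\mathbb{W}_{p,q} = \coprod_{c_0 \to \cdots \to c_p \t{ in } \s{C}} N_q(Fc_0) ,
\]
whose horizontal faces are built from the structure functors $Ff_i : Fc_{i-1} \to Fc_i$. An $n$-simplex of $N(\int_\s{C}F)$ is a chain $c_0 \to \cdots \to c_n$ in $\s{C}$ together with an object $x_0 \in Fc_0$ and morphisms $h_i : Ff_i(x_{i-1}) \to x_i$ in $Fc_i$, so pushing a simplex of $NFc_0$ forward along the $Ff_i$ and recording the resulting chain defines a natural simplicial map $\eta : \t{hocolim}_\s{C} NF \to N(\int_\s{C}F)$. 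Everything in sight is natural in $F$ and preserves coproducts in the variable $F$, so it will suffice to verify that $\eta$ is a weak equivalence for a generating class of diagrams and then to propagate the result along a resolution.

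\textbf{Free diagrams.} I would first check $\eta$ on the free diagrams $F = c_!\s{A}$, that is, the left Kan extensions along $c : \ast \to \s{C}$ of a single small category $\s{A}$, for which $(c_!\s{A})(d) = \s{C}(c,d)^\t{disc} \times \s{A}$. A direct computation gives a natural isomorphism $\int_\s{C}(c_!\s{A}) \cong (c/\s{C}) \times \s{A}$, where $c/\s{C}$ is the coslice category; since $c/\s{C}$ has the initial object $(c,\t{id}_c)$, its nerve is contractible, and therefore $N(\int_\s{C} c_!\s{A}) \cong N(c/\s{C}) \times N\s{A} \simeq N\s{A}$. On the other side, $N(c_!\s{A}) = \s{C}(c,-)^\t{disc} \times N\s{A}$, and since the fibre is discrete, $\t{hocolim}_\s{C} \s{C}(c,-)^\t{disc}$ is exactly the nerve $N(c/\s{C})$, which is contractible; hence $\t{hocolim}_\s{C} N(c_!\s{A}) \simeq N\s{A}$ as well. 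The map $\eta$ realizes these two identifications, so it is a weak equivalence on free diagrams. This step is really an instance of Quillen's Theorem A, packaged as the contractibility of the coslice.

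\textbf{Assembly and the main obstacle.} To pass from free diagrams to arbitrary $F$, I would use the canonical bar resolution $B_\bullet(F) \to F$, whose $p$-th term $\coprod_{c_0 \to \cdots \to c_p}(c_p)_!(Fc_0)$ is a coproduct of free diagrams and which is an objectwise weak equivalence after realization. Applying $\t{hocolim}_\s{C} N(-)$ and $N(\int_\s{C}(-))$ levelwise turns $\eta$ into a map of bisimplicial sets that, by the previous step, is a weak equivalence in each simplicial degree; the bisimplicial diagonal lemma then shows that $\eta$ is a weak equivalence on diagonals, which are $\t{hocolim}_\s{C} NF$ and $N(\int_\s{C}F)$ respectively. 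The crux of the argument, and the step I expect to require the most care, is precisely this last comparison: one must know that $N(\int_\s{C}(-))$ is homotopy invariant, that is, sends objectwise weak equivalences of diagrams to weak equivalences, and that it commutes up to weak equivalence with the realization of the bar resolution. Both facts rest on analyzing the comma categories of the projection $\int_\s{C}F \to \s{C}$ and showing that the relevant fibres are contractible, once more via Theorem A; organizing this bookkeeping so that it interacts cleanly with the diagonal lemma is the main obstacle.
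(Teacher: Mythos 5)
First, a point of comparison: the paper does not prove this statement at all. It is imported from Thomason's paper \cite{Thomasonhocolim}, and the text only records the explicit simplex-level formula for $\eta$ (which your construction of $\eta$ reproduces correctly). So your proposal has to stand on its own. Its first two steps do: your map agrees with Thomason's, and the free-diagram computation $\int_{\s{C}} c_!\s{A} \cong (c/\s{C}) \times \s{A}$, with both sides of $\eta$ weakly equivalent to $N\s{A}$, is correct.

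The gap is in the assembly step, and it is not a technical wrinkle to be worked out later: the fact you defer there \emph{is} the theorem, and the tools you name cannot prove it. Since $\int_{\s{C}}(-)$ preserves coproducts, your own free-diagram computation identifies $\int_{\s{C}} B_p(F) \cong \coprod_{d_0 \to \cdots \to d_p} (d_p/\s{C}) \times Fd_0$, hence
\[
N_q \int_{\s{C}} B_p(F) \;\cong\; \coprod_{d_0 \to \cdots \to d_p \to c_0 \to \cdots \to c_q} N_q(Fd_0).
\]
Collapsing the contractible coslice nerves $N(d_p/\s{C})$ is a levelwise weak equivalence in $p$, so the diagonal lemma identifies $\t{diag}_p\, N\int_{\s{C}} B_\bullet(F)$ with $\t{hocolim}_{\s{C}} NF$ itself. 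Consequently the claim you postpone --- that the augmentation $\t{diag}_p\, N\int_{\s{C}} B_\bullet(F) \to N\int_{\s{C}} F$ is a weak equivalence --- is precisely Thomason's theorem restated, and the proposal is circular unless that claim is proved by independent means. Moreover, the two levelwise criteria by which one could hope to apply the diagonal lemma to it both fail. Fixing $q$ and varying $p$: the augmented simplicial set $N_q\int B_\bullet(F) \to N_q\int F$ can have empty fibers, so no extra-degeneracy or contractible-fiber argument exists. For example, take $\s{C} = [1]$, $F(0) = *$, $F(1) = [1] = \{a \to b\}$, with $F(0 \to 1)(*) = a$; the $1$-simplex $(0,*) \to (1,b)$ of $N\int_{\s{C}} F$ has empty preimage in every level $p$, because every simplex in the image over the chain $0 \to 1$ lifts to $F(0)$ and hence has morphism component $\t{id}_a$. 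Fixing $p$ instead: $\int B_0(F) \to \int F$ is the map $\coprod_d (d/\s{C}) \times Fd \simeq \coprod_d Fd \to \int_{\s{C}} F$, which is not a weak equivalence. Your closing suggestion is also off target: the comma categories of the projection $\int_{\s{C}} F \to \s{C}$ are (up to equivalence) the fibers $Fc$, which are not contractible, and the homotopy invariance of $N\int_{\s{C}}(-)$ cannot be extracted from Theorem A, since a weak equivalence of categories need not have contractible comma categories --- that is exactly the gap between Theorem A hypotheses and general weak equivalences. Closing this hole requires a genuinely different interpolation argument (this is what Thomason's proof, or the treatment in Goerss--Jardine, actually supplies); it does not follow formally from the free case plus the diagonal lemma.
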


We will need a precise description of Thomason's map going forward, and conveniently, Thomason also uses the Bousfield-Kan model of the homotopy colimit. Therefore we understand $\t{hocolim}_{\s{C}} \, NF$ to be the diagonal of the bisimplicial set, whose space of $q$-simplices is
	\[
	\coprod_{
	\tiny{
	\begin{array}{c}
	c_0 \leftarrow \cdots \leftarrow c_q
	\t{ in } \s{C}
	\end{array}
	}
	} NF(c_q) .
	\]
Under this model, the map $\eta : \t{hocolim}_{\s{C}} \, NF \to N(\int F)$ sends a $(q,q)$-simplex
	\[
	(\underbrace{c_0 \stackrel{f_1}{\leftarrow} \dots \stackrel{f_q}{\leftarrow} c_q}_{\t{in } \s{C}} , \underbrace{x_0 \stackrel{h_1}{\leftarrow} \cdots \stackrel{h_q}{\leftarrow} x_q}_{\t{in } F(c_q)})
	\]
to the $q$-simplex
	\[
	\Big( (c_0,F(f_1 \cdots f_q)(x_0) ) \stackrel{k_1}{\leftarrow} (c_1, F(f_2 \cdots f_q)(x_1)) \stackrel{k_2}{\leftarrow} \cdots \stackrel{k_q}{\leftarrow} (c_q , x_q) \Big) ,
	\]
where $k_i = (f_i , F(f_i \cdots f_q)(h_i))$.

We now have all the tools needed to prove the categorical Elmendorf theorem.
	
\section{The construction for $G$-categories}\label{sec:ElmCat}

With Proposition \ref{prop:Elmhocolim} and Thomason's theorem in tow, we now explain how to lift the Elmendorf construction to the category of small $G$-categories. We define the categorical Elmendorf construction, and then we establish its basic properties.

\subsection{The categorical Elmendorf construction}\label{subsec:catelm} Here is the definition.

\begin{defn}\label{defn:catElm} Let $G$ be a discrete group, and suppose $\s{X} : \s{O}_G^\op \to \b{Cat}$ is a presheaf of categories. The \emph{categorical Elmendorf construction} applied to $\s{X}$ is the Grothendieck construction
	\[
	C\s{X} = \int_{\s{O}_{G,+}^\op} (\s{X} \circ p) ,
	\]
where $p : \s{O}_{G,+} \to \s{O}_G$ is the forgetful functor (cf. Definition \ref{defn:OG+}).

We unwind this. For any $\s{X} : \s{O}_G^\op \to \b{Cat}$:
	\begin{enumerate}
		\item{}the objects of $C\s{X}$ are triples $(G/H,aH,x)$ such that $G/H \in \s{O}_G$, $aH \in G/H$, and $x \in \s{X}(G/H)$, and
		\item{}the morphisms of $\int (\s{X} \circ p)$ are pairs $(f,h) : (G/H,aH,x) \to (G/K,bK,y)$ such that $(G/H,aH) \leftarrow (G/K,bK) : f$ in $\s{O}_{G,+}$ and $h : f^* x \to y$ in $\s{X}(G/K)$, where $f^* = \s{X}(f)$.
	\end{enumerate}
Define a $G$-action on $\int (\s{X} \circ p)$ by setting $g \cdot (G/H,aH,x) = (G/H,gaH,x)$ and $g \cdot (f,h) = (f,h)$.

Now let $\lambda : \s{X} \Rightarrow \s{Y}$. Define a $G$-functor $C\lambda : C\s{X} \to C\s{Y}$ by setting 
	\[
	C\lambda(G/H , aH, x) = (G/H, aH, \lambda_{G/H}(x)) \quad\t{and}\quad C\lambda(f,h) = (f,\lambda_{G/K}(h)),
	\]
where $(f,h) : (G/H, aH, x) \to (G/K, bK, y)$. This makes $C$ into a functor.
\end{defn}

By Thomason's theorem, we immediately obtain a \emph{nonequivariant} equivalence
	\[
	\abs{\eta} : C(B\s{X}) = \t{hocolim}_{\s{O}_{G,+}^\op}(B \circ \s{X} \circ p) \stackrel{\simeq}{\longrightarrow}  B \int_{\s{O}_{G,+}^\op} (\s{X} \circ p) = B(C\s{X}) ,
	\]
and we now prove that it respects the $G$-action.

\begin{thm}\label{thm:compElm} Let $G$ be a discrete group. For any orbit presheaf $\s{X} : \s{O}_G^\op \to \b{Cat}$, Thomason's map	
	\[
	\abs{\eta} :  CB\s{X} \to BC\s{X} 
	\]
is a weak $G$-equivalence.
\end{thm}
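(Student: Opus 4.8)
The plan is to verify the two conditions defining a weak $G$-equivalence: that $\abs{\eta}$ is a $G$-map, and that $\abs{\eta}^H$ is a weak homotopy equivalence for every subgroup $H \subset G$. The structural fact driving everything is that the diagram $\s{X} \circ p : \s{O}_{G,+}^\op \to \b{Cat}$ is strictly $G$-invariant. Indeed, writing $\sigma_g$ for the automorphism of $\s{O}_{G,+}$ (and of $\s{O}_{G,+}^\op$) induced by $g \cdot (-)$, we have $p \circ \sigma_g = p$ on objects while $\sigma_g$ fixes every morphism, so $(\s{X} \circ p) \circ \sigma_g = \s{X} \circ p$. Consequently $G$ acts on both $CB\s{X}$ and $BC\s{X}$ purely by relabeling the indexing chains in $\s{O}_{G,+}$, leaving the $\s{X}$-values untouched: on $BC\s{X} = B\int_{\s{O}_{G,+}^\op}(\s{X}\circ p)$ this recovers the action of Definition \ref{defn:catElm}, and on $CB\s{X}$ it recovers the summand-permuting action of Proposition \ref{prop:Elmhocolim}.

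For equivariance, I would argue directly from the explicit formula for $\eta$ recorded just before Theorem \ref{thm:compElm}. The $G$-action on a $(q,q)$-simplex relabels its underlying chain $c_0 \leftarrow \cdots \leftarrow c_q$ by $\sigma_g$ and fixes the fiber data, and the same is true on the target $q$-simplex. Since Thomason's formula is assembled only from the morphisms $f_i$ of $\s{O}_{G,+}$, from the composites $f_i \cdots f_q$, and from the functors $(\s{X}\circ p)(f_i \cdots f_q)$, all of which are unchanged under $\sigma_g$ because $\sigma_g$ fixes morphisms and $(\s{X}\circ p)\sigma_g = \s{X}\circ p$, the map $\eta$ commutes with the two actions. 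Hence $\eta$, and therefore $\abs{\eta}$, is a $G$-map.

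For the fixed points, fix $H \subset G$ and let $\s{P} = (\s{O}_{G,+})^H$ be its fixed subcategory. Because $G$ fixes every morphism of $\s{O}_{G,+}$, the category $\s{P}$ is the full subcategory on the $H$-fixed objects $(G/K,aK)$, namely those with $a^{-1}Ha \subset K$. On the target, the $G$-action on $C\s{X}$ leaves all morphisms fixed, so $(C\s{X})^H$ is the full subcategory on $H$-fixed objects, which is exactly $\int_{\s{P}^\op}(\s{X}\circ p)$; Corollary \ref{cor:fpCat} then gives $(BC\s{X})^H \cong B\int_{\s{P}^\op}(\s{X}\circ p)$. On the source, $CB\s{X}$ is the realization of a $G$-simplicial set whose $G$-action merely permutes the summands indexed by chains of $\s{O}_{G,+}$, acting trivially within each summand. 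A chain is $H$-fixed precisely when all of its objects lie in $\s{P}$, so the $H$-fixed sub-bisimplicial set computes $\t{hocolim}_{\s{P}^\op} B(\s{X}\circ p)$, and Proposition \ref{prop:fpsSet} identifies $(CB\s{X})^H$ with its realization. Under these identifications $\abs{\eta}^H$ is visibly Thomason's map for the restricted diagram $(\s{X}\circ p)|_{\s{P}^\op}$ over $\s{P}^\op$, since the formula for $\eta$ restricts summandwise to the $H$-fixed chains. Thomason's theorem applied to $\s{P}^\op$ shows this map is a homotopy equivalence, so $\abs{\eta}^H$ is a weak homotopy equivalence for every $H$, completing the proof.

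I expect the main obstacle to be the careful identification of the $H$-fixed points of the source $CB\s{X}$. The subtlety is twofold: one must invoke Proposition \ref{prop:fpsSet} to commute realization past fixed points, which is exactly where the possibly-infinite group $G$ requires care, and one must check that the summand-permutation picture is correct, namely that the $G$-action is genuinely trivial within each fixed summand and that the face and degeneracy operators of the $H$-fixed sub-bisimplicial set remain inside chains of $\s{P}$. Once this identification is secured, the equivariance statement and the appeal to Thomason's theorem are formal.
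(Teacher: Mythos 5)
Your proposal is correct and takes essentially the same approach as the paper: both identify $(C\s{X})^H$ with the Grothendieck construction over the fixed subcategory $(\s{O}_{G,+}^\op)^H$, identify the $H$-fixed points of the homotopy colimit with the homotopy colimit of the restricted diagram over that same subcategory, recognize $\eta^H$ as another instance of Thomason's map, and then commute realization past fixed points via Proposition \ref{prop:fpsSet}. The points you flag as delicate (strict $G$-invariance of $\s{X} \circ p$, triviality of the action within fixed summands, and the infinite-group care in Proposition \ref{prop:fpsSet}) are exactly the ones the paper's proof handles.
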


\begin{proof} We show how to identify the maps that $\eta$ induces on fixed points with other instances of $\eta$. Let $\s{X} : \s{O}_G^\op \to \b{Cat}$ be an orbit presheaf, let $K \subset G$ be a subgroup, and abbreviate $\s{O}_{G,+}^\op$ to $\s{O}$ and $(\s{O}_{G,+}^\op)^K$ to $\s{O}^K$. Then
	\[
	\Big[ \int_{\s{O}} (\s{X} \circ p) \Big]^K = \int_{\s{O}^K} (\s{X} \circ p)\Big|_{\s{O}^K} ,
	\]
because these categories are both the full subcategory of $\int (\s{X} \circ p)$ spanned by the triples $(G/H, aH , x)$ such that $G/H \in \s{O}_G$, $aH \in (G/H)^K$, and $x \in \s{X}(G/H)$. Consequently,
	\[
	\Big[ N \int_{\s{O}} (\s{X} \circ p) \Big]^K = N \Big[ \int_{\s{O}^K} (\s{X} \circ p)\Big|_{\s{O}^K} \Big] .
	\]

Next, we equip $\t{hocolim}_{\s{O}} (N \circ \s{X} \circ p) = \t{diag} \, B_*(N \circ \s{X} \circ p , \s{O}_{G,+} , *) $ with a $G$-action as follows. Let $G$ act on the space
	\begin{align*}
	B_q(N \circ \s{X} \circ p , \s{O}_{G,+} , *)
	&= 
	\!\!\!\!\!\!\!\!
	\coprod_{\tiny{\begin{array}{c}
		(G/H_q , a_q H_q) \leftarrow \cdots \leftarrow (G/H_0,a_0 H_0)	\\
		\t{in } \s{O}_{G,+}
		\end{array}}} 
		\!\!\!\!\!\!\!\!
		(N \circ \s{X})(G/H_q)
	\end{align*}
of $q$-simplices by permuting summands. This simplicial action induces a $G$-action on $\t{diag} \, B_*(N \circ \s{X} \circ p , \s{O}_{G,+} , *)$, whose geometric realization is the $G$-action on the topological Elmendorf construction $CB\s{X}$. Moreover,
	\[
	[\t{diag} \, B_*(N \circ \s{X} \circ p , \s{O}_{G,+} , *) ]^K
	=
	\t{diag} \, B_* ( (N \circ \s{X} \circ p)|_{\s{O}^K} , \s{O}^K , *)
	\]
because on either side, the $q$-simplices are tuples
	\[
	\Big(
	\underbrace{(G/H_0 , a_0 H_0) \to \cdots \to (G/H_q, a_q H_q)}_{\t{in } \s{O}_{G,+}}
	\, , \,
	\underbrace{x_0 \leftarrow \cdots \leftarrow x_q}_{\t{in } \s{X}(G/H_q)}
	\Big)
	\]
such that $a_0 H_0 ,\dots , a_q H_q$ are all $K$-fixed.

Thomason's map $\eta : \t{hocolim} (N \circ \s{X} \circ p) \to N \int (\s{X} \circ p)$ is $G$-equivariant,  and inspecting his formula shows that there is a commutative square
	\[
	\begin{tikzpicture}
		\node(A) at (6,0) {$\Big[ N \int_{\s{O}} (\s{X} \circ p) \Big]^K$};
		\node(B) at (0,0) {$[\t{hocolim}_{\s{O}} (N \circ \s{X} \circ p)]^K$};
		\node(C) at (6,-1.5) {$N\Big[ \int_{\s{O}^K} (\s{X} \circ p)\Big|_{\s{O}^K} \Big]$};
		\node(D) at (0,-1.5) {$\t{hocolim}_{\s{O}^K} (N \circ \s{X} \circ p)|_{\s{O}^K}$};
		\path[->]
		(A) edge [right] node {$=$} (C)
		(B) edge [left] node {$=$} (D)
		(B) edge [above] node {$\eta^K$} (A)
		(D) edge [below] node {$\eta$} (C)
		;
	\end{tikzpicture} .
	\]
The bottom map is an equivalence by Thomason's homotopy colimit theorem, which implies that $\eta^K$ is an equivalence for all subgroups $K \subset G$. Applying the geometric realization functor $\abs{\cdot} : \b{sSet} \to \b{Top}$ and using Proposition \ref{prop:fpsSet} to commute $(-)^K$ with $\abs{\cdot}$ shows that
	\[
	\abs{\eta}^K : (CB\s{X})^K \to (BC\s{X})^K
	\]
is a homotopy equivalence for all $K$. Hence $\abs{\eta}$ is a weak $G$-equivalence.
\end{proof}

\subsection{Elmendorf's theorem for $G$-categories} 

One would hope that the categorical Elemendorf construction $C : \b{Fun}(\s{O}_G^\op,\b{Cat}) \to \b{Cat}^G$ is homotopy inverse to taking fixed points, but this is not automatic from Theorem \ref{thm:compElm}. Therefore we shall prove it.

\begin{thm}\label{thm:catcoal} For any discrete group $G$, the functors
	\[
	C : \b{Fun}(\s{O}_G^\op,\b{Cat}) \rightleftarrows \b{Cat}^G : \Phi
	\]
are inverse up to natural weak equivalence.
\end{thm}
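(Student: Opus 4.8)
The plan is to exhibit explicit natural weak equivalences $\pi : \Phi C \Rightarrow \t{id}$ on $\b{Fun}(\s{O}_G^\op,\b{Cat})$ and $\xi : C\Phi \Rightarrow \t{id}$ on $\b{Cat}^G$, in both cases reducing the verification that they are weak equivalences to one elementary fact about Grothendieck constructions. The key observation is a categorical analogue of the extra degeneracy that makes the topological bar construction contract: if $\s{C}$ is a small category with terminal object $t$ and $F : \s{C} \to \b{Cat}$ is any functor, then the fibre inclusion $\iota : F(t) \to \int_\s{C} F$, $x \mapsto (t,x)$, is right adjoint to the projection $\pi : \int_\s{C} F \to F(t)$, $(c,x) \mapsto F(!_c)(x)$, where $!_c : c \to t$ is the unique map. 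Indeed $\pi\iota = \t{id}$, and the canonical morphisms $(!_c,\t{id}) : (c,x) \to (t,F(!_c)(x))$ assemble into a unit $\t{id} \Rightarrow \iota\pi$, the triangle identities following from the uniqueness of maps into $t$. Since adjoint functors induce inverse homotopy equivalences on nerves, $B\pi$ and $B\iota$ are mutually inverse, so $\pi$ is a weak equivalence of categories.

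To apply this I would first record that $(\s{O}_{G,+}^\op)^H$ has terminal object $(G/H,eH)$: a morphism $(G/L,aL) \to (G/H,eH)$ in $(\s{O}_{G,+}^\op)^H$ is a $G$-map $G/H \to G/L$ sending $eH \mapsto aL$, and exactly one such map exists precisely when $(G/L,aL)$ is $H$-fixed. The fibre of $\s{X}\circ p$ over this terminal object is $\s{X}(G/H)$, so by the identity $(C\s{X})^H = \int_{(\s{O}_{G,+}^\op)^H}(\s{X}\circ p)|$ established in the proof of Theorem \ref{thm:compElm}, the projection furnishes a functor $\pi_H : \Phi C\s{X}(G/H) = (C\s{X})^H \to \s{X}(G/H)$, $(G/L,aL,x) \mapsto \rho_a^* x$, where $\rho_a : G/H \to G/L$ sends $eH \mapsto aL$. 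It then remains to check that the $\pi_H$ are strictly natural in $G/H$, i.e. that $\pi$ is a map of presheaves; this comes down to the identity $\rho_a \circ r_b = \rho_{ba}$, which says that the two ways of pulling back agree. Each $\pi_H$ is a weak equivalence by the adjunction above, so $\pi$ is a natural weak equivalence of presheaves, giving $\Phi C \simeq \t{id}$.

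For the other direction I would define $\xi : C\Phi\s{D} \to \s{D}$ on objects by $(G/L,aL,x) \mapsto a \cdot x$ and check that it is a well-defined $G$-functor, natural in $\s{D}$: it is independent of the coset representative because $x$ is $L$-fixed, and it is equivariant since $g\cdot(G/L,aL,x) = (G/L,gaL,x) \mapsto ga\cdot x$. Taking $H$-fixed points and using $\Phi\s{D}(G/H) = \s{D}^H$, one finds that $\xi^H : (C\Phi\s{D})^H \to \s{D}^H$ sends $(G/L,aL,x) \mapsto a\cdot x = \rho_a^* x$, which is precisely the projection of the Grothendieck construction $\int_{(\s{O}_{G,+}^\op)^H}(\Phi\s{D}\circ p)$ onto the fibre over its terminal object $(G/H,eH)$. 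Hence $\xi^H$ is a weak equivalence for every $H$, and by Corollary \ref{cor:fpCat} (identifying $B(\s{E}^H)$ with $(B\s{E})^H$) this means $B\xi$ is an equivalence of $G$-spaces, so $\xi$ is a natural weak $G$-equivalence and $C\Phi \simeq \t{id}$.

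The conceptual heart of the argument is the terminal-object adjunction, where the topological extra degeneracy is replaced by something purely categorical. The main bookkeeping obstacle is verifying that the fibre projections $\pi_H$ glue into a strictly natural transformation of presheaves rather than merely a levelwise collection of weak equivalences, together with the routine but fiddly identification of morphisms in the Grothendieck constructions $(C\s{X})^H$ and $(C\Phi\s{D})^H$ and the attendant $\t{op}$-bookkeeping. I note that this route proves Theorem \ref{thm:catcoal} directly and elementarily, invoking neither the classical topological Elmendorf theorem nor Theorem \ref{thm:compElm}.
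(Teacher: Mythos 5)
Your proposal is correct and follows essentially the same route as the paper: your $\pi$ and $\xi$ are precisely the paper's $\ve$ (Proposition \ref{prop:wePhiC}) and $\t{ev}$ (Proposition \ref{prop:weCPhi}), including the reduction of the second equivalence to the first via the fixed-point identification $\xi^H = \pi_H$, which is the paper's identity $\Phi \t{ev}_{\s{C}} = \ve_{\Phi\s{C}}$, followed by the fact that $\Phi$ creates weak equivalences (Corollary \ref{cor:fpCat}). Your terminal-object adjunction $\pi \dashv \iota$ is a clean conceptual repackaging of data the paper exhibits by hand, namely the fibre inclusion $\eta_L$ with $\ve_L \circ \eta_L = \t{id}$ and the natural transformation $(r_a,\t{id}_{r_a^*x}) : \t{id} \Rightarrow \eta_L \circ \ve_L$, which is exactly your unit.
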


The required transformations $C \circ \Phi \Rightarrow \t{id}$ and $\Phi \circ C \Rightarrow \t{id}$ are constructed in Propositions \ref{prop:wePhiC} and \ref{prop:weCPhi}.

\begin{prop}\label{prop:wePhiC} There is a natural weak equivalence $\ve : \Phi \circ C \Rightarrow \t{id}$.
\end{prop}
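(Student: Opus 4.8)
The plan is to produce $\ve_{\s{X}}$ as a natural transformation $\Phi C \s{X} \Rightarrow \s{X}$ of presheaves, and then to check that each component is a weak equivalence by computing its effect after applying the classifying space functor. First I would compute $\Phi C \s{X}$ explicitly. By definition $(\Phi C \s{X})(G/K) = (C\s{X})^K$, and since $C\s{X} = \int_{\s{O}_{G,+}^\op}(\s{X} \circ p)$, the fixed-point description already established in the proof of Theorem \ref{thm:compElm} identifies this with $\int_{(\s{O}_{G,+}^\op)^K} (\s{X} \circ p)|_{(\s{O}_{G,+}^\op)^K}$, the full subcategory spanned by triples $(G/H, aH, x)$ with $aH \in (G/H)^K$ and $x \in \s{X}(G/H)$. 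I would then define $\ve_{G/K} : (C\s{X})^K \to \s{X}(G/K)$ on objects by sending such a triple to $r_a^* x$, where $r_a : G/K \to G/H$ is the $G$-map with $r_a(eK) = aH$ (which exists precisely because $aH$ is $K$-fixed), and correspondingly on morphisms. One must check this is a functor, that it is natural in $G/K \in \s{O}_G$, and that the whole family assembles into a map of presheaves.

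The key structural point, which I expect to drive the naturality verification, is that the object $(G/K, eK, r_a^* x)$ is canonically isomorphic to $(G/H, aH, x)$ inside the fixed-point Grothendieck construction, via the morphism induced by $r_a$. In other words, every object of $(C\s{X})^K$ is isomorphic to one ``based at $eK$,'' and on the subcategory of such based objects $\ve_{G/K}$ restricts to an isomorphism onto $\s{X}(G/K)$. This suggests the cleanest route: exhibit $\ve_{G/K}$ as a deformation retraction, or more precisely show that the inclusion $\s{X}(G/K) \hookrightarrow (C\s{X})^K$ as based objects is right adjoint (or left adjoint) to $\ve_{G/K}$, with unit or counit given by the canonical isomorphisms just described. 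An adjunction between functors of categories induces a homotopy equivalence after applying $B$, which is exactly the weak-equivalence condition we need.

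The main obstacle will be organizing the adjunction coherently enough that it is visibly natural in $G/K$, and in particular checking compatibility with the presheaf structure maps $r_b^* : (C\s{X})^K \leftarrow (C\s{X})^{K'}$ induced by morphisms $G/K' \to G/K$ in $\s{O}_G$. The subtlety is that rebasing along $r_a$ interacts with the $\s{O}_G$-indexing, so one has to verify that the chosen based representatives transform correctly and that the unit isomorphisms are themselves natural transformations of presheaves of categories rather than merely objectwise isomorphisms. Once the adjunction is shown to be $\s{O}_G$-natural, I would conclude as follows: applying $B$ componentwise turns each $\ve_{G/K}$ into a homotopy equivalence $B((C\s{X})^K) \to B(\s{X}(G/K))$, so $\ve$ is a levelwise weak equivalence of presheaves, which is precisely a weak equivalence in $\b{Fun}(\s{O}_G^\op, \b{Cat})$. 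Naturality of $\ve$ in the presheaf $\s{X}$ itself follows from the fact that $C$ and $\Phi$ act on a transformation $\lambda$ by postcomposition, so that the based-object description is preserved strictly.
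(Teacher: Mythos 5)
Your construction of $\ve$ (sending $(G/H,aH,x)$ to $r_a^*x$) and your plan to prove each component is a weak equivalence via an adjunction with the based-objects inclusion $\eta_K : x \mapsto (G/K,eK,x)$ is essentially the paper's argument. However, two specific claims in your write-up are false, and the second one, as you have framed it, would block the proof.

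First, $(G/H,aH,x)$ is \emph{not} canonically isomorphic to $(G/K,eK,r_a^*x)$ in $(C\s{X})^K$. The canonical map $(r_a,\t{id}_{r_a^*x}) : (G/H,aH,x) \to (G/K,eK,r_a^*x)$ is merely a morphism: its $\s{O}_{G,+}$-component $r_a : G/K \to G/H$ is invertible in $\s{O}_G$ only when $K = aHa^{-1}$, whereas $K$-fixedness of $aH$ only forces $K \subset aHa^{-1}$. If this map really were an isomorphism, then (the counit $\ve_K \circ \eta_K = \t{id}$ being the identity) $\ve_K$ would be an equivalence of categories, which fails already for $G = \bb{Z}/2$, $K = e$, and $\s{X}$ the terminal presheaf: there $(C\s{X})^e$ is the three-element poset $(G/e,e,*) \leftarrow (G/G,eG,*) \to (G/e,\sigma,*)$, with no two objects isomorphic, while $\s{X}(G/e)$ is a point. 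The repair is to keep your adjunction but accept a non-invertible unit: $\ve_K$ is left adjoint to $\eta_K$, with identity counit and unit $(r_a,\t{id}_{r_a^*x})$. An adjunction, equivalently the natural transformation $\t{id} \Rightarrow \eta_K \circ \ve_K$, still induces a homotopy equivalence after applying $B$, which is all the argument needs.

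Second, your stated ``main obstacle''---organizing the adjunction so that it is natural in $G/K$---is both unachievable and unnecessary. Unachievable: the inclusion $\eta_K$ does not commute with the presheaf structure maps of $\Phi C\s{X}$; for $r_c : G/L' \to G/L$ the structure map $c \cdot (-)$ sends $(G/L,eL,x)$ to $(G/L,cL,x)$, which is not in the image of $\eta_{L'}$, so no choice of based representatives transforms correctly (the paper makes exactly this remark about $\eta_L$). Unnecessary: a weak equivalence of presheaves is a levelwise condition, so what must be natural in $G/K$ is only $\ve$ itself---a direct check from the formula, using $r_a \circ r_c = r_{ca}$---while the homotopy inverses $\eta_K$ may be non-natural. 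Once you drop this self-imposed requirement and downgrade ``isomorphism'' to ``morphism,'' your proof coincides with the paper's.
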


\begin{proof} Let $\s{X} : \s{O}_G^\op \to \b{Cat}$ be an orbit presheaf, and $L \subset G$ be a subgroup. The $L$-fixed subcategory of $(C\s{X})^L$ is the full subcategory of $C\s{X}$ spanned by the triples $(G/H,aH,x)$ such that $aH \in (G/H)^L$. Construct a functor
	\[
	(\ve_\s{X})_L : (C\s{X})^L \to \s{X}(G/L)
	\]
as follows:
	\begin{enumerate}
		\item{}Given any $L$-fixed object $(G/H,aH,x)$, let $r_a : G/L \to G/H$ be the $G$-map sending $eL$ to $aH$, and let $\s{X}(G/L) \leftarrow \s{X}(G/H) : r_a^*$ be its image under $\s{X}$. We define
			\[
			(\ve_{\s{X}})_L(G/H,aH,x) \stackrel{\t{def}}{=} r_a^* x.
			\]
		\item{}Given any morphism $(f,h) : (G/H,aH,x) \to (G/K,bK,y)$ between $L$-fixed triples, let $r_a : G/L \to G/H$ and $r_b : G/L \to G/K$ be as above, and note that $f \circ r_b = r_a$. We define
			\[
			(\ve_{\s{X}})_L(f,h) \stackrel{\t{def}}{=} r_b^* h : r_a^* x = r_b^* f^* x \to r_b^* y.
			\]
	\end{enumerate}
Straightforward checks show that $(\ve_{\s{X}})_L$ is natural in $L$ and $\s{X}$, i.e. $\ve$ is a natural transformation $\ve : \Phi \circ C \Rightarrow \t{id}_{\b{Fun}(\s{O}_G^\op,\b{Cat})}$.

It remains to check that $\ve_{\s{X}} : \Phi C \s{X} \Rightarrow \s{X}$ is a weak equivalence for every $\s{X}$. We shall construct a homotopy inverse to $\ve_L = (\ve_{\s{X}})_L : (C\s{X})^L \to \s{X}(G/L)$ for every $G/L$. Given any object $x \in \s{X}(G/L)$, let $\eta_L(x) = (G/L,eL,x) \in (C\s{X})^L$, and given any morphism $f : x \to y$ in $\s{X}(G/L)$, let $\eta_L(f) = (\t{id}_{G/L} , f)$. Then $\eta_L$ is a functor, and $\ve_L \circ \eta_L = \t{id}$. The composite $\eta_L \circ \ve_L$ is not the identity, but
	\[
	( r_a, \t{id}_{r_a^* x} ) : (G/H,aH,x) \to (G/L , eL , r_a^* x) = \eta_L \ve_L(G/H,aH,x)
	\]
defines a natural transformation $\t{id} \Rightarrow \eta_L \circ \ve_L$. Therefore the classifying space functor $B : \b{Cat} \to \b{Top}$ sends $\ve_L$ and $\eta_L$ to homotopy inverse functions. It follows that $B\ve_L$ is a homotopy equivalence for all $L$, and that $\ve : \Phi C \s{X} \Rightarrow \s{X}$ is a levelwise weak equivalence in $\b{Fun}(\s{O}_G^\op , \b{Cat})$.
\end{proof}

\begin{rem} As with the topological Elemendorf construction, the map $\eta_L$ does not vary naturally in $L$. Nevertheless, $\ve$ is a natural weak equivalence.
\end{rem}

\begin{prop}\label{prop:weCPhi} There is a natural weak equivalence $\t{ev} : C \circ \Phi \Rightarrow \t{id}$. 
\end{prop}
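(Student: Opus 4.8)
The plan is to mirror the proof of Proposition \ref{prop:wePhiC}, first constructing $\t{ev}$ explicitly and then analyzing it one fixed-point level at a time. Fix a $G$-category $\s{C}$. Recall that $C\Phi\s{C}$ has objects the triples $(G/H,aH,x)$ with $x \in \Phi\s{C}(G/H) \cong \s{C}^H$, and that the $G$-action moves only the coset $aH$. Since $x$ is $H$-fixed, the object $a \cdot x \in \s{C}$ depends only on the coset $aH$, so I would set $\t{ev}_\s{C}(G/H,aH,x) = a \cdot x$ on objects. A morphism $(f,h) : (G/H,aH,x) \to (G/K,bK,y)$ consists of a $G$-map $f = r_c : G/K \to G/H$ with $bcH = aH$, together with a morphism $h : c \cdot x \to y$ in $\s{C}^K$ (using $f^* = \Phi\s{C}(r_c) = c \cdot (-)$). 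Applying $b \cdot (-)$ and using $bcH = aH$, so that $(bc) \cdot x = a \cdot x$, yields a morphism $b \cdot h : a \cdot x \to b \cdot y$, and I would set $\t{ev}_\s{C}(f,h) = b \cdot h$. One checks this is independent of the chosen coset representatives (because $x$, $y$, and $h$ are fixed by the relevant isotropy), that it is functorial, that $\t{ev}_\s{C}(g \cdot (f,h)) = g \cdot \t{ev}_\s{C}(f,h)$ so that $\t{ev}_\s{C}$ is a $G$-functor, and that it is natural in $\s{C}$ since every $G$-functor commutes with the action.

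It then remains to show each $\t{ev}_\s{C}$ is a weak equivalence of $G$-categories, which by Corollary \ref{cor:fpCat} amounts to showing that $(\t{ev}_\s{C})^L : (C\Phi\s{C})^L \to \s{C}^L$ induces a homotopy equivalence on classifying spaces for every subgroup $L \subset G$. As in Theorem \ref{thm:compElm}, the domain is the full subcategory of $C\Phi\s{C}$ spanned by the triples $(G/H,aH,x)$ with $aH \in (G/H)^L$, and on such objects $(\t{ev}_\s{C})^L$ returns $a \cdot x$, which indeed lies in $\s{C}^L$ because $L \subset aHa^{-1}$. I would produce a homotopy inverse exactly as before: let $s_L : \s{C}^L \to (C\Phi\s{C})^L$ send $w \mapsto (G/L,eL,w)$ and $\phi \mapsto (\t{id}_{G/L},\phi)$. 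A direct computation gives $(\t{ev}_\s{C})^L \circ s_L = \t{id}_{\s{C}^L}$ on the nose.

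For the other composite, the morphisms $(r_a , \t{id}_{a \cdot x}) : (G/H,aH,x) \to (G/L,eL,a \cdot x)$ should assemble into a natural transformation $\t{id} \Rightarrow s_L \circ (\t{ev}_\s{C})^L$, where $r_a : G/L \to G/H$ is the map $eL \mapsto aH$, which exists precisely because $aH$ is $L$-fixed. Applying $B$ then shows $Bs_L$ and $B(\t{ev}_\s{C})^L$ are homotopy inverse, so $B(\t{ev}_\s{C})^L$ is a homotopy equivalence for all $L$, and hence $\t{ev}_\s{C}$ is a weak $G$-equivalence.

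The genuinely new content relative to Proposition \ref{prop:wePhiC} is pinning down the correct formula for $\t{ev}_\s{C}$ on morphisms and verifying that it is a well-defined $G$-functor; once that is in place, the fixed-point analysis and the homotopy-inverse argument are formally identical to the earlier proof. I expect the main obstacle to be the bookkeeping in checking functoriality of $\t{ev}_\s{C}$ --- in particular, tracking how the composition rule for the Grothendieck construction (cf. \S\ref{subsec:catelm}) interacts with the twisting by coset representatives $b$ and $c$ --- together with confirming that all the morphisms and objects appearing in the natural transformation $\t{id} \Rightarrow s_L \circ (\t{ev}_\s{C})^L$ are genuinely $L$-fixed, so that the diagram really lives in $(C\Phi\s{C})^L$.
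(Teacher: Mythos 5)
Your proposal is correct and takes essentially the same approach as the paper: your formula for $\t{ev}_{\s{C}}$ agrees with the paper's ($ax$ on objects, $bh$ on morphisms), and your level-by-level homotopy-inverse argument is exactly the proof of Proposition \ref{prop:wePhiC} specialized to $\s{X} = \Phi\s{C}$. The paper packages that second step more briefly by observing that $\Phi\, \t{ev}_{\s{C}} = \ve_{\Phi\s{C}}$ and then citing Proposition \ref{prop:wePhiC} together with the fact that $\Phi$ creates weak equivalences, rather than re-running the fixed-point analysis inline as you do.
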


\begin{proof} Suppose $\s{C}$ is a $G$-category, and consider $C \Phi \s{C}$. If $(G/H,aH,x)$ is an object of $C \Phi \s{C}$, then $x \in \s{C}^H$, and if $(f,h) : (G/H,aH,x) \to (G/K,bK,y)$ is a morphism of $C \Phi \s{C}$, then $h \in \s{C}^K$. Define a $G$-functor $\t{ev}_{\s{C}} : C \Phi \s{C} \to \s{C}$ by
	\[
	\t{ev}_{\s{C}}(G/H,aH,x) = ax	\quad\t{and}\quad	\t{ev}_{\s{C}}\Big( (f,h) : (G/H,aH,x) \to (G/K,bK,y) \Big) = bh.
	\]
One readily checks that this defines a natural transformation $\t{ev} : C \circ \Phi \Rightarrow \t{id}_{\b{Cat}^G}$, and that $\Phi \t{ev}_{\s{C}} = \ve_{\Phi \s{C}}$. The natural transformation $\ve_{\Phi \s{C}}$ is a weak equivalence by Proposition \ref{prop:wePhiC}, and it follows that $\t{ev}_{\s{C}}$ is also a weak equivalence, because $\Phi$ creates such morphisms. Therefore $\t{ev}$ is a natural weak equivalence.
\end{proof}

\subsection{Example: equivariant universal spaces}\label{app:univ} With the basic properties of the categorical Elmendorf construction established, we now consider an example. Equivariant universal spaces arise from the simplest orbit presheaves, and testing on them reveals a fundamental issue with the posetal Elmendorf construction.

 Let $\s{F}$ be a family of subgroups of $G$, and let $\s{X}_{\s{F}} : \s{O}_G^\op \to \b{Cat}$ be defined by
	\[
	\s{X}_{\s{F}}(G/H) = \left\{
	\begin{array}{cl}
		*	&\t{if } H \in \s{F}	\\
		\varnothing	&\t{if } H \notin \s{F}
	\end{array}
	\right. ,
	\]
where $*$ is the terminal category and $\varnothing$ is the initial one. The closure of $\s{F}$ under subconjugacy ensures that $\s{X}_{\s{F}}$ is a well-defined presheaf of categories.

Now regard a preorder $P$ as a category with at most one morphism $x \to y$ for any $x,y \in P$. Given such a category $\s{P}$, we recover a preorder on $\t{Ob}(\s{P})$ by declaring $x \leq y$ if there is a morphism $x \to y$. Then $\s{X}_{\s{F}}$ is actually a presheaf of posets, but this does not quite imply that $C\s{X}$ is a poset. Indeed, applying the Elmendorf construction to $\s{X}_{\s{F}}$ yields a \emph{$G$-preorder} $C\s{X}_{\s{F}}$, whose underlying $G$-set is
	\[
	C\s{X}_{\s{F}} = \coprod_{H \in \s{F}} G/H ,
	\]
and whose order relation is
	\[
	(H,aH) \leq (K,bK)	\quad\t{if and only if}\quad	aHa^{-1} \supset bKb^{-1} .
	\]
Specializing to the case $\s{F} = \{e\}$, we obtain
	\[
	C\s{X}_{\{e\}} = G/e
	\]
with $x \leq y$ for all $x,y \in G/e$. This is the standard categorical model of $EG$.

On the other hand, the categorical Elmendorf construction does send presheaves of preorders to $G$-preorders.

\begin{lem}\label{lem:Cpreord} If $\s{X} : \s{O}_G^\op \to \b{Cat}$ is valued in preorders, then $C\s{X}$ is a $G$-preorder.
\end{lem}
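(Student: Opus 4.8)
The claim is that if $\s{X} : \s{O}_G^\op \to \b{Cat}$ takes values in preorders (categories with at most one morphism between any two objects), then $C\s{X} = \int_{\s{O}_{G,+}^\op}(\s{X} \circ p)$ is again a preorder. Since the $G$-action on $C\s{X}$ only permutes objects and acts trivially on morphisms, the content is purely about the underlying category: I must verify that $C\s{X}$ has at most one morphism between any two of its objects. The plan is therefore to unwind the definition of the Grothendieck construction and count morphisms directly.

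First I would fix two objects $(G/H, aH, x)$ and $(G/K, bK, y)$ of $C\s{X}$ and recall from Definition \ref{defn:catElm} that a morphism between them is a pair $(f, h)$, where $f : (G/H, aH) \to (G/K, bK)$ is a morphism in $\s{O}_{G,+}^{\op}$ (equivalently, a $G$-map $f : G/K \to G/H$ with $f(bK) = aH$) and $h : f^* x \to y$ is a morphism in $\s{X}(G/K)$. The key observation is that each of the two coordinates is essentially rigid. For the first coordinate: a $G$-map $G/K \to G/H$ sending $bK$ to $aH$ is \emph{uniquely determined}, since a $G$-map out of an orbit is determined by the image of a single coset. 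Hence there is at most one admissible $f$. For the second coordinate: because $\s{X}(G/K)$ is a preorder by hypothesis, there is at most one morphism $f^* x \to y$ in it, so $h$ is also uniquely determined once $f$ is fixed.

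Combining these two facts, the pair $(f, h)$ is unique whenever it exists, so there is at most one morphism $(G/H, aH, x) \to (G/K, bK, y)$ in $C\s{X}$. This shows $C\s{X}$ is a preorder, and since the $G$-action respects this structure (fixing all morphisms), it is a $G$-preorder. The main subtlety to get right is the direction of the maps and the opposite categories: one must be careful that $f$ is a morphism in $\s{O}_{G,+}^{\op}$, i.e. that the orbit map $f : G/K \to G/H$ carries $bK$ to $aH$ rather than the reverse, so that the uniqueness of $G$-maps out of an orbit applies cleanly. Once the variances are correctly aligned, both uniqueness statements are immediate, and no genuine computation is required.
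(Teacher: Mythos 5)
Your proof is correct, and it is exactly the direct verification the paper has in mind: the paper omits the argument entirely, stating only that ``the proof is straightforward.'' Both key points --- that a $G$-map out of an orbit $G/K$ is determined by the image of any single coset (so $f$ is unique, since $f(bK)=aH$ forces $f(cK)=cb^{-1}aH$ for all $c$), and that $h$ is then unique because $\s{X}(G/K)$ is a preorder --- are stated accurately and with the variances handled correctly.
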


The proof is straightforward. This lemma is the starting point for our posetal Elmendorf construction.

\section{The construction for $G$-posets}\label{sec:ElmPos}

In this section, we explain how to lift Elmendorf's construction to the category of $G$-posets. The natural guess is to restrict the categorical Elmendorf construction $C\s{X} \cong \int (\s{X} \circ p)$ to presheaves of posets. The result is guaranteed to be a $G$-preorder by Lemma \ref{lem:Cpreord}, but it need not be a $G$-poset, as demonstrated by the presheaves $\s{X}_{\s{F}}$ in \S\ref{app:univ}. Therefore we must devise a method for converting $G$-preorders into $G$-posets, without changing their equivariant homotopy type.

\subsection{Posetal quotients}\label{sec:posquot} There is an adjunction $\ol{(-)} : \b{Preord} \rightleftarrows \b{Pos} : i$ between the category of preorders and the category of posets. The right adjoint $i$ is the inclusion functor, and the left adjoint $\ol{(-)}$ sends a preorder to its posetal quotient. This is the poset obtained by taking the quotient of a preorder $(P,\leq)$ by the equivalence relation
	\[
	x \sim y	\quad\t{if and only if}\quad		x \leq y \t{ and } y \leq x ,
	\]
and then declaring $[x] \leq [y]$ if $x \leq y$ for some choice of representatives.

For any preorder $P$, there is a quotient map $\pi : P \to \ol{P}$, and choosing a set of representatives for $\sim$ defines a section $P \leftarrow \ol{P} : s$. Thus $\pi \circ s = \t{id}$. The composite $s \circ \pi$ is not the identity, but for any $x \in P$, we have $x \leq s[x] \leq x$, i.e. there are natural transformations $\t{id}_P \Rightarrow s \circ \pi \Rightarrow \t{id}_P$. On passage to classifying spaces, we obtain a homotopy equivalence $BP \simeq B\ol{P}$.

Unfortunately, the equivalence $BP \simeq B\ol{P}$ does not hold equivariantly.

\begin{cex} Consider the $G$-preorder $P = C \s{X}_{\{e\}}$ from \S\ref{app:univ}. Then $\ol{P} = *$, and applying the classifying space functor gives $BP \simeq EG$ and $B\ol{P} = *$.
\end{cex}

We need another construction. In keeping with the rest of homotopy theory, we avoid taking a quotient and instead construct a resolution.

\subsection{Posetal resolutions}\label{subsec:Milnor} We now explain how to resolve a preorder by a poset, in such a way that preserves equivariant homotopy types.

By way of motivation, consider the space $EG$ once more. Then
	\[
	EG \simeq B(*,G,G) = B C \s{X}_{\{e\}},
	\]
but the space $EG$ can also be modeled by the Milnor join
	\[
	EG \simeq \t{colim}\, \underbrace{G * G * \cdots * G}_{n \t{ times}} .
	\]
When $G = \{g_1,\dots,g_n \}$ is a finite group, the Milnor join is the classifying space of the countably infinite poset
	\[
	\begin{tikzpicture}
		\node(10) at (1,0) {$g_1$};
		\node(20) at (2,0) {$g_2$};
		\node(30) at (3,0) {$\dots$};
		\node(40) at (4,0) {$g_n$};
		\node(11) at (1,1) {$g_1$};
		\node(21) at (2,1) {$g_2$};
		\node(31) at (3,1) {$\dots$};
		\node(41) at (4,1) {$g_n$};
		\node(12) at (1,2) {$g_1$};
		\node(22) at (2,2) {$g_2$};
		\node(32) at (3,2) {$\dots$};
		\node(42) at (4,2) {$g_n$};
		\node(13) at (1,3) {};
		\node(23) at (2,3) {};
		\node(33) at (3,3) {};
		\node(43) at (4,3) {};
		\node(14) at (1,3.5) {$\vdots$};
		\node(24) at (2,3.5) {$\vdots$};
		\node(34) at (3,3.5) {$\vdots$};
		\node(44) at (4,3.5) {$\vdots$};
		\path[-]
		(10)	edge node {} (11)
			edge node {} (21)
			edge node {} (41)
		(20)	edge node {} (11)
			edge node {} (21)
			edge node {} (41)
		(40)	edge node {} (11)
			edge node {} (21)
			edge node {} (41)
			
		(11)	edge node {} (12)
			edge node {} (22)
			edge node {} (42)
		(21)	edge node {} (12)
			edge node {} (22)
			edge node {} (42)
		(41)	edge node {} (12)
			edge node {} (22)
			edge node {} (42)
			
		(12)	edge node {} (13)
			edge node {} (23)
			edge node {} (43)
		(22)	edge node {} (13)
			edge node {} (23)
			edge node {} (43)
		(42)	edge node {} (13)
			edge node {} (23)
			edge node {} (43)
		;
	\end{tikzpicture} ,
	\]
where each vertex $g_i$ represents an element of the poset, and each upward edge represents an order relation $\leq$. The classifying space of the first row is $G$, the classifying space of the first two rows is the join $G*G$, and so on.

Formally, the poset above is the set $MG = G \times \bb{N}$ equipped with the relation
	\[
	(x,m) < (y,n)	\quad\t{if and only if}\quad		m<n .
	\]
The classifying space $BMG$ is nonequivariantly contractible because $MG$ is a filtered poset (cf. \cite[\S1]{QuillenHA}), and the group $G$ acts freely on $BMG$ because it acts freely on $MG$. Therefore $BMG \simeq EG$. The first coordinate projection $\pi : MG \to C\s{X}_{\{e\}}$ is order-preserving, and it induces an equivalence on classifying spaces. Therefore we regard $MG$ as a posetal resolution of $C\s{X}_{\{e\}}$.

We now generalize this $M$ construction to arbitrary $G$-preorders.

\begin{defn}\label{defn:Milnor} Suppose $(P,\leq)$ is a $G$-preorder. We define $MP$ to be the set
	\[
	MP = P \times \bb{N}
	\]
equipped with the partial order
	\begin{center}
		$(x,m) < (y,n)$ if and only if $x \leq_P y$ and $m < n$.
	\end{center}
We define a $G$-action on $MP$ by $g \cdot (x,m) = (gx,m)$ for all $g \in G$ and $(x,m) \in MP$. This makes $MP$ into a $G$-poset.

Next, given any map $f : P \to Q$ of $G$-preorders, we define
	\[
	Mf = f \times \t{id} : MP \to MQ.
	\]
This makes $M$ into a functor $M : \b{Preord}^G \to \b{Pos}^G$.
\end{defn}

The first coordinate projection $\pi : MP \to P$ is a natural order-preserving $G$-map, and we now prove that is a resolution of $P$. We begin with the nonequivariant result. In the following argument, we understand a \emph{subposet} of a poset $Q$ to be a subset $S \subset Q$ equipped with the restriction of $Q$'s order relation, i.e. if $x,y \in S$, then $x \leq_S y$ if and only if $x \leq_{Q} y$.

\begin{prop}\label{prop:Mpieq} For any nonequivariant preorder $P$, the projection $\pi : MP \to P$ induces a weak equivalence $B\pi : BMP \to BP$.
\end{prop}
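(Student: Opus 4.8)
The plan is to exhibit a homotopy inverse to $B\pi$ by producing, on the level of classifying spaces, an explicit contracting-type structure that realizes the idea that each "level" of $MP$ builds up an iterated join whose colimit is contractible relative to $P$. The cleanest route is to avoid the join language and argue directly with natural transformations of posets, since $B$ turns natural transformations into homotopies. First I would define a section-like functor $s \colon P \to MP$ by $s(x) = (x,0)$; this is order-preserving because $x \leq_P y$ does not force a strict level increase, so $(x,0)$ and $(y,0)$ need only be comparable when $x = y$. The trouble is that $s$ is \emph{not} a functor in general, since $x <_P y$ does not give $(x,0) < (y,0)$ in $MP$, so $s$ fails to preserve strict relations. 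Thus the naive "pick level $0$" section is not order-preserving, and I expect this to be the first place a careful argument is needed.

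The correct approach is to observe that $MP$ is, nonequivariantly, a poset whose classifying space is contractible over $P$ in a structured way: the key structural fact is that the upward shift map $\sigma \colon MP \to MP$, $\sigma(x,m) = (x,m+1)$, together with the "constant at a basepoint in the next level" comparison, organizes $BMP$ as a homotopy colimit of iterated joins. Concretely, I would first record that $\sigma$ is order-preserving and that there is a zig-zag of natural transformations connecting $\mathrm{id}_{MP}$ to $\sigma$ through a functor that is "eventually constant in the $P$-coordinate," using that $(x,m) < (y,m+1)$ whenever $x \leq_P y$. This lets one compare the identity on $MP$ with maps that collapse the $P$-direction after one level, and iterating shows $B\pi$ admits a homotopy inverse after passing to the colimit over levels.

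Alternatively — and I think this is the more robust and genuinely easier path — I would prove directly that $B\pi$ is a weak equivalence by verifying that $\pi$ has contractible homotopy fibers via Quillen's Theorem A. For each $x \in P$ one computes the comma category $\pi / x$ (or $x \backslash \pi$, depending on variance), whose objects are pairs $(y,m)$ with $y \leq_P x$; this comma category is filtered, because given finitely many such $(y_i, m_i)$ one can choose $n > \max_i m_i$ and the element $(x, n)$ dominates them all, using $y_i \leq_P x$ and $m_i < n$. A filtered poset has contractible classifying space (as already invoked in the excerpt for $MG$), so every homotopy fiber is contractible and Theorem A gives that $B\pi$ is a weak equivalence.

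The main obstacle is not the topology but pinning down the exact variance and comparability conventions so that the comma categories come out filtered rather than cofiltered, and checking that the strict/non-strict distinction in Definition \ref{defn:Milnor} (namely $(x,m) < (y,n)$ \emph{iff} $x \leq_P y$ \emph{and} $m < n$) is handled correctly when $P$ merely a preorder rather than a poset — in particular that distinct elements $x \sim_P y$ with $x \neq y$ at different levels remain strictly comparable. I would therefore carry out the steps in the order: (1) verify $MP$ is a genuine poset and $\pi$ is order-preserving; (2) fix variance and describe the relevant comma category explicitly; (3) prove it is filtered using the dominating element $(x,n)$ for large $n$; (4) conclude contractibility of its nerve and invoke Theorem A. The filteredness step is where all the real content lives, and it is exactly the generalization of the "$MG$ is filtered hence $BMG$ is contractible" observation already made for the universal space example.
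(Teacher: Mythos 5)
Your Quillen's Theorem A argument --- the third paragraph; the first two sketches are dead ends, and you rightly abandon them --- is correct, and it is genuinely slicker than the paper's own proof. The paper applies Theorem A too, but with the opposite variance of comma category: it works with $x \downarrow \pi = \{(y,n) \in MP \mid x \leq_P y\}$, which is \emph{not} filtered in general, since two elements lying above $x$ in an arbitrary preorder need not have a common upper bound. The paper therefore needs an intermediate step: it retracts $x \downarrow \pi$ onto the subposet $\pi^{-1}[x] = \{(y,n) \mid x \leq_P y \leq_P x\}$, using a zig-zag of inequalities $(y,n) \leq s(y,n) \geq ir(y,n)$ where $s$ bumps the level of points outside the class $[x]$, and only $\pi^{-1}[x]$ is filtered, with $(x,\max(m,n)+1)$ as the dominating element. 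Your choice of $\pi/x = \{(y,m) \in MP \mid y \leq_P x\}$ eliminates this step entirely: as you observe, $(x,n)$ with $n > \max_i m_i$ dominates any finite family of objects, so $\pi/x$ is filtered on the nose (it is nonempty because $(x,0)$ lies in it, and the coequalizing condition of filteredness is vacuous since $MP$ is a poset), hence has contractible classifying space, and the dual form of Theorem A concludes. So your diagnosis that the variance is ``where all the real content lives'' is exactly right: with your variance the comma categories are filtered outright, while with the paper's variance they are merely contractible, via a deformation onto a filtered core. For completeness you should state the dual Theorem A you are invoking --- apply Quillen's Theorem A to $\pi^\op : (MP)^\op \to P^\op$ and use the natural homeomorphism $B(\s{C}^\op) \cong B\s{C}$ --- and note that contractibility of filtered posets is the same fact from Quillen that the paper cites for $MG$.
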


\begin{proof} We compute the comma categories of $\pi : MP \to P$ and then apply Quillen's Theorem A. Consider the comma category $x \downarrow \pi$ for some $x \in P$. It can be identified with the subposet $\{ (y,n) \in MP \, | \, x \leq y \t{ in } P\} \subset MP$, and the latter contains $\pi^{-1}[x] = \{(y,n) \in MP \, | \, x \leq y \leq x \t{ in } P\}$ as a subposet. Here $[x]$ denotes the equivalence class of $x$ under the relation in \S\ref{sec:posquot}. We claim that the inclusion $i : \pi^{-1}[x] \hookrightarrow x \downarrow \pi$ is a homotopy equivalence.

Define a monotone map $r : x \downarrow \pi \to \pi^{-1}[x]$ by
	\[
	r(y,n) = \left\{
		\begin{array}{cl}
			(y,n)	&	\t{if } y \in [x] \\
			(x,n)	&	\t{if } y \notin [x]
		\end{array}
		\right. .
	\]
Then $r \circ i = \t{id} : \pi^{-1}[x] \to \pi^{-1}[x]$ and $Br \circ Bi$ is the identity on $B(\pi^{-1}[x])$. The composite $i \circ r$ is not the identity, but it is connected to the identity through a zig-zag of inequalities. Let $s : x \downarrow \pi \to x \downarrow \pi$ be the monotone map defined by
	\[
	s(y,n) = \left\{
		\begin{array}{cl}
			(y,n)	&	\t{if } y \in [x] \\
			(y,n+1)	&	\t{if } y \notin [x]
		\end{array}
		\right. .
	\]
Then $(y,n) \leq s(y,n) \geq ir(y,n)$ for all $(y,n) \in x \downarrow \pi$. Therefore $Bi \circ Br$ is homotopic to the identity on $B(x \downarrow \pi)$, and therefore $Bi : B(\pi^{-1}[x]) \to B(x \downarrow \pi)$ is a homotopy equivalence.

Now for any $(y,m) , (z,n) \in \pi^{-1}[x]$, we have the inequalities $y \leq x \leq z$, and therefore $(y,m) < (z,n)$ if and only if $m < n$. It follows $\pi^{-1}[x]$ is filtered, because any two elements $(y,m)$ and $(z,n)$ are bounded above by $(x,\max(m,n)+1)$. Therefore $B(x \downarrow \pi) \simeq B(\pi^{-1}[x]) \simeq *$, and Quillen's Theorem A implies that $B\pi : BMP \to BP$ is a homotopy equivalence.
\end{proof}

Now we boost Proposition \ref{prop:Mpieq} up to an equivariant result. Unlike the posetal quotient $\pi : P \to \ol{P}$, the projection map $\pi : MP \to P$ plays well with the $G$-action.

\begin{thm}\label{thm:MGeq} Let $G$ be a discrete group. For any $G$-preorder $P$, the natural projection $\pi : MP \to P$ is a weak equivalence of $G$-preorders.
\end{thm}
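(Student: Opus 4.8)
The plan is to reduce the equivariant statement to the nonequivariant Proposition \ref{prop:Mpieq} by passing to fixed points. By definition, $\pi : MP \to P$ is a weak equivalence of $G$-preorders exactly when $B\pi$ is an equivalence of $G$-spaces, i.e.\ when $(B\pi)^H : (BMP)^H \to (BP)^H$ is a homotopy equivalence for every subgroup $H \subset G$. Corollary \ref{cor:fpCat} lets me commute $B$ past fixed points, so the task becomes showing that the restricted map $\pi^H : (MP)^H \to P^H$ induces a homotopy equivalence $B(\pi^H)$ for each $H$.

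The key step, and the only place with genuine content beyond the nonequivariant proposition, is to identify these fixed points. Since $g \cdot (x,m) = (gx,m)$, the $H$-fixed objects of $MP = P \times \bb{N}$ are precisely the pairs $(x,m)$ with $x \in P^H$. The order on $(MP)^H$ is inherited from $MP$, and $P^H$ is the full subpreorder of $P$ on the $H$-fixed elements, so I expect a canonical identification of posets
	\[
	(MP)^H = M(P^H),
	\]
under which $\pi^H$ becomes the Milnor projection $M(P^H) \to P^H$; here $M(P^H)$ denotes the nonequivariant construction of Definition \ref{defn:Milnor} applied to the underlying preorder $P^H$. Verifying this is the main thing to check carefully: one must confirm that $H$-fixedness of a relation $(x,m) < (y,n)$ reduces to $H$-fixedness of its two endpoints (which holds because a preorder has at most one morphism between given objects, so the relation is fixed as soon as its endpoints are), and that the resulting order on $P^H \times \bb{N}$ agrees on the nose with the Milnor order $(x,m) < (y,n) \iff x \leq_{P^H} y \t{ and } m < n$.

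With the identification in hand the conclusion is immediate. Proposition \ref{prop:Mpieq}, applied to the nonequivariant preorder $P^H$, shows that $B(\pi^H) : BM(P^H) \to B(P^H)$ is a homotopy equivalence. Running this over all subgroups $H$ and reapplying Corollary \ref{cor:fpCat} shows that $(B\pi)^H$ is a homotopy equivalence for every $H$, so $B\pi$ is an equivalence of $G$-spaces and $\pi$ is a weak equivalence of $G$-preorders. The main obstacle is therefore mild: the whole argument is a formal fixed-point reduction, and the essential homotopical input has already been isolated in Proposition \ref{prop:Mpieq}; what remains is the bookkeeping of the identification $(MP)^H = M(P^H)$.
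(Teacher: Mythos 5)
Your proposal is correct and follows essentially the same route as the paper: identify $(MP)^H = M(P^H)$ so that $\pi^H$ becomes the nonequivariant Milnor projection, apply Proposition \ref{prop:Mpieq} levelwise, and conclude via the fact that weak equivalences of $G$-preorders are detected on fixed points (which the paper phrases as ``$\Phi$ creates these morphisms,'' resting on the same Corollary \ref{cor:fpCat} you invoke explicitly). Your added remark that a relation in a preorder is $H$-fixed as soon as its endpoints are is a nice touch the paper leaves implicit, but it does not change the argument.
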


\begin{proof} For any subgroup $H \subset G$, there is an equality $(MP)^H = M(P^H)$ of posets, and a commutative square
	\[
	\begin{tikzpicture}
		\node(A) at (0,0) {$(MP)^H$};
		\node(B) at (3,0) {$P^H$};
		\node(C) at (0,-1.5) {$M(P^H)$};
		\node(D) at (3,-1.5) {$P^H$};
		\path[->]
		(A) edge [above] node {$\pi^H$} (B)
		(C) edge [below] node {$\pi$} (D)
		(A) edge [left] node {$=$} (C)
		(B) edge [right] node {$=$} (D)
		;
	\end{tikzpicture} .
	\]
By Proposition \ref{prop:Mpieq}, the bottom map is an equivalence on classifying spaces, and therefore the top map is, too. Thus $\pi^H$ is an equivalence for all subgroups $H \subset G$, i.e. $\Phi \pi$ is an equivalence, and therefore $\pi$ is also an equivalence because $\Phi$ creates these morphisms.\end{proof}

\begin{cor}The functors $M : \b{Preord}^G \rightleftarrows \b{Pos}^G : \t{inc}$ are inverse up to natural weak equivalence.
\end{cor}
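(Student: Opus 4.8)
The plan is to realize both required natural weak equivalences by the single natural projection $\pi : MP \to P$ supplied by Theorem \ref{thm:MGeq}, so that all the homotopical content is already contained in that theorem and nothing new must be proved.

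First I would handle the composite $\t{inc} \circ M$. For a $G$-preorder $P$, the poset $MP$ satisfies $\t{inc}(MP) = MP$, and $\pi : MP \to P$ is an order-preserving $G$-map, hence a morphism in $\b{Preord}^G$. Naturality is immediate: for any map $f : P \to Q$ of $G$-preorders one has $Mf = f \times \t{id}$, and first-coordinate projection commutes with $f \times \t{id}$, so the square relating $\pi$ at $P$ and at $Q$ commutes strictly. By Theorem \ref{thm:MGeq}, every component $\pi$ is a weak equivalence of $G$-preorders, so $\pi$ is the desired natural weak equivalence $\t{inc} \circ M \Rightarrow \t{id}_{\b{Preord}^G}$.

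Next I would treat the composite $M \circ \t{inc}$ by taking the same map. For a $G$-poset $Q$ we have $M(\t{inc}(Q)) = MQ$, and $\pi : MQ \to Q$ is a $G$-map whose source $MQ$ and target $Q$ are both posets, so it is a morphism in $\b{Pos}^G$; the same projection identity shows that these assemble into a natural transformation $M \circ \t{inc} \Rightarrow \t{id}_{\b{Pos}^G}$.

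The only point needing comment is that $\pi : MQ \to Q$, which Theorem \ref{thm:MGeq} produces as a weak equivalence of $G$-preorders, is also a weak equivalence in $\b{Pos}^G$. But this is immediate from the definitions recorded in the excerpt: in both $\b{Preord}^G$ and $\b{Pos}^G$ a $G$-functor is declared a weak equivalence exactly when its image under the classifying space functor $B$ is an equivalence of $G$-spaces, and $B$ does not distinguish the two settings. Consequently no extra work is required, and I expect no genuine obstacle here — the substance of the corollary is entirely subsumed by Theorem \ref{thm:MGeq}, with the argument above amounting only to the bookkeeping that both counit-type maps can be taken to be $\pi$.
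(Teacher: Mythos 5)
Your proof is correct and is exactly the argument the paper intends: the corollary is stated without proof precisely because both natural weak equivalences $\t{inc} \circ M \Rightarrow \t{id}$ and $M \circ \t{inc} \Rightarrow \t{id}$ are given by the projection $\pi$, whose components are weak equivalences by Theorem \ref{thm:MGeq}. Your observations about strict naturality of $\pi$ and the fact that weak equivalences in $\b{Pos}^G$ and $\b{Preord}^G$ are both detected by $B$ are the right bookkeeping points, and nothing further is needed.
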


\subsection{The Elmendorf theorem for $G$-posets}\label{subsec:poselm}

We conclude by discussing Elmendorf's theorem for $G$-posets. For clarity, we shall write $C_{cat}$ for the categorical Elmendorf construction and $C_{pos}$ for the posetal one.

\begin{defn} Let $G$ be a discrete group, and suppose $\s{X} : \s{O}_G^\op \to \b{Pos}$ is a presheaf of posets. The \emph{posetal Elmendorf construction} applied to $\s{X}$ is
	\[
	C_{pos}\s{X} = M C_{cat} \s{X},
	\]
where $C_{cat}$ and $M$ are the functors from Definitions \ref{defn:catElm} and \ref{defn:Milnor}.
\end{defn}

Unwinding everything shows that the underlying $G$-set of $C_{pos}\s{X}$ is
	\[
	\coprod_{G/H \in \s{O}_G} G/H \times \s{X}(G/H) \times \bb{N},
	\]
with $G$ acting on the $G/H$ factors only. Its elements are quadruples $(G/H,aH,x,m)$, where $G/H \in \s{O}_G$, $aH \in G/H$, $x \in \s{X}(G/H)$, and $m \in \bb{N}$. The partial order on $C_{pos}\s{X}$ is defined by
	\[
	(G/H,aH,x,m) < (G/K,bK,y,n)	\quad\t{if and only if}\quad
	\left\{
		\begin{array}{l}
			aHa^{-1} \supset bKb^{-1},	\\
			r_{b^{-1}a}^*x \leq y \t{ in } \s{X}(G/K),	\\
			\t{and } m < n
		\end{array}
	\right. 
	\]
where $r_{b^{-1}a} : G/K \to G/H$ sends $eK$ to $b^{-1}aH$.

As with the categorical Elmendorf construction, the posetal Elmendorf construction is compatible with the original.

\begin{thm}\label{thm:Elmpostop} Let $G$ be a discrete group. For any orbit presheaf $\s{X} : \s{O}_G^\op \to \b{Pos}$, there is a zig-zag of natural weak $G$-equivalences between $CB\s{X}$ and $BC_{pos} \s{X}$.
\end{thm}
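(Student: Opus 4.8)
The plan is to route the comparison through the intermediate $G$-space $BC_{cat}\s{X}$, combining the categorical comparison theorem (Theorem~\ref{thm:compElm}) with the posetal resolution theorem (Theorem~\ref{thm:MGeq}). Throughout I will regard the orbit presheaf $\s{X} : \s{O}_G^\op \to \b{Pos}$ as a presheaf of categories via the inclusion $\b{Pos} \hookrightarrow \b{Cat}$, so that $C_{cat}\s{X}$ is defined and $CB(-)$, $BC_{cat}(-)$, and $BC_{pos}(-)$ are all functors $\b{Fun}(\s{O}_G^\op,\b{Pos}) \to \b{Top}^G$.

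For the first leg I would invoke Theorem~\ref{thm:compElm} with $\s{X}$ viewed as an orbit presheaf of categories. This produces Thomason's map $\abs{\eta}_{\s{X}} : CB\s{X} \to BC_{cat}\s{X}$, which is a weak $G$-equivalence. It is natural in $\s{X}$, since $\eta$ is natural by Thomason's homotopy colimit theorem and geometric realization is functorial.

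For the second leg I would first note, via Lemma~\ref{lem:Cpreord}, that the $G$-category $C_{cat}\s{X}$ is in fact a $G$-preorder, so Theorem~\ref{thm:MGeq} applies with $P = C_{cat}\s{X}$. It supplies the natural projection $\pi : MC_{cat}\s{X} \to C_{cat}\s{X}$, which is a weak equivalence of $G$-preorders. Because $C_{pos}\s{X} = MC_{cat}\s{X}$ by definition, applying the classifying space functor $B$ would then yield a natural weak $G$-equivalence $B\pi_{\s{X}} : BC_{pos}\s{X} \to BC_{cat}\s{X}$; naturality follows from naturality of $\pi$ (Definition~\ref{defn:Milnor}) together with the functoriality of $C_{cat}$ and $B$.

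Splicing the two legs at the common vertex $BC_{cat}\s{X}$ gives the desired zig-zag
	\[
	CB\s{X} \xrightarrow{\,\abs{\eta}_{\s{X}}\,} BC_{cat}\s{X} \xleftarrow{\,B\pi_{\s{X}}\,} BC_{pos}\s{X}
	\]
of natural weak $G$-equivalences, realized by the promised two maps. Given the machinery already assembled, I do not expect any genuine difficulty here; the only points to verify are that the categorical comparison theorem is legitimately applicable to a presheaf of posets (it is, since $C_{cat}$, $B$, and the topological $C$ are all defined on the underlying categorical data) and that both legs are honestly natural so that they may be spliced. The substantive content of the statement is thus inherited entirely from Theorems~\ref{thm:compElm} and~\ref{thm:MGeq}.
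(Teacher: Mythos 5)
Your proposal is correct and is essentially the paper's own proof: both form the two-map zig-zag $CB\s{X} \xrightarrow{\abs{\eta}} BC_{cat}\s{X} \xleftarrow{B\pi} BC_{pos}\s{X}$ and cite Theorems \ref{thm:compElm} and \ref{thm:MGeq} for the two legs. Your extra remarks on naturality and on Lemma \ref{lem:Cpreord} (guaranteeing that $C_{cat}\s{X}$ is a $G$-preorder so that $M$ applies) only make explicit what the paper leaves implicit.
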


\begin{proof}The maps
	\[
	CB\s{X} \stackrel{\abs{\eta}}{\longrightarrow} B \Big( \int_{\s{O}_{G,+}} (\s{X} \circ p) \Big) = BC_{cat}\s{X} \stackrel{B\pi}{\longleftarrow} BC_{pos}\s{X}
	\]
are both weak $G$-equivalences by Theorems \ref{thm:compElm} and \ref{thm:MGeq}.
\end{proof}

Moreover, the posetal Elmendorf theorem holds for $C_{pos}$.

\begin{thm}\label{thm:Elmpos} For any discrete group $G$, the functors
	\[
	C_{pos} : \b{Fun}(\s{O}_G^\op , \b{Pos} ) \rightleftarrows \b{Pos}^G : \Phi
	\]
are inverse up to natural weak equivalence.
\end{thm}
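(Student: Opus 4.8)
The plan is to build the two required natural weak equivalences directly from the categorical Elmendorf theorem (Theorem~\ref{thm:catcoal}) and the Milnor resolution (Theorem~\ref{thm:MGeq}), exploiting the factorization $C_{pos} = M \circ C_{cat}$. The key structural observation is that $\b{Pos}$ is a full subcategory of $\b{Preord}$, and that $C_{cat}$ sends a presheaf of posets to a $G$-preorder by Lemma~\ref{lem:Cpreord}; thus I may pass freely through the preorder and categorical settings, provided the \emph{endpoints} of each transformation are genuine (presheaves of) posets.

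For the composite $\Phi \circ C_{pos}$, I would define a natural transformation $\ve^{pos} : \Phi \circ C_{pos} \Rightarrow \t{id}$ as
\[
\Phi C_{pos}\s{X} = \Phi M C_{cat}\s{X} \xrightarrow{\Phi\pi} \Phi C_{cat}\s{X} \xrightarrow{\ve_{\s{X}}} \s{X},
\]
where $\pi : M C_{cat}\s{X} \to C_{cat}\s{X}$ is the Milnor projection and $\ve$ is from Proposition~\ref{prop:wePhiC}. Both factors are natural in $\s{X}$ — the first by naturality of $\pi$ together with the functoriality of $\Phi \circ C_{cat}$, the second by Proposition~\ref{prop:wePhiC} — so the composite is natural. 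For the equivalence claim, $\pi$ is a weak equivalence of $G$-preorders by Theorem~\ref{thm:MGeq}, whence $\Phi\pi$ is a levelwise weak equivalence since $\Phi$ creates such morphisms, and $\ve_{\s{X}}$ is a levelwise weak equivalence by Proposition~\ref{prop:wePhiC}. A composite of levelwise weak equivalences is again one. The intermediate presheaf $\Phi C_{cat}\s{X}$ is valued only in preorders, but the two endpoints $\Phi M C_{cat}\s{X}$ and $\s{X}$ are presheaves of posets, so $\ve^{pos}_{\s{X}}$ is a morphism of $\b{Fun}(\s{O}_G^\op,\b{Pos})$.

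Symmetrically, for $C_{pos} \circ \Phi$ I would set $\t{ev}^{pos} : C_{pos} \circ \Phi \Rightarrow \t{id}$ equal to
\[
C_{pos}\Phi\s{C} = M C_{cat}\Phi\s{C} \xrightarrow{\pi} C_{cat}\Phi\s{C} \xrightarrow{\t{ev}_{\s{C}}} \s{C},
\]
with $\t{ev}$ from Proposition~\ref{prop:weCPhi}. Naturality is again inherited from $\pi$ and $\t{ev}$, and both maps are weak equivalences of $G$-categories ($\pi$ by Theorem~\ref{thm:MGeq}, $\t{ev}_{\s{C}}$ by Proposition~\ref{prop:weCPhi}), so the composite is a weak equivalence of $G$-posets. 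Here the middle object $C_{cat}\Phi\s{C}$ is only a $G$-preorder while the endpoints $M C_{cat}\Phi\s{C}$ and $\s{C}$ are $G$-posets, which suffices.

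This argument is essentially formal, since the substantive work — that $\pi$ is an equivariant resolution and that $C_{cat}$ is homotopy inverse to $\Phi$ — is already discharged by Theorems~\ref{thm:MGeq} and~\ref{thm:catcoal}. The only point requiring care is the bookkeeping of posets versus preorders, namely confirming that the preorder-valued intermediate stages do not leave the poset category once composed, and that $\Phi$ both commutes with the relevant fixed points and reflects weak equivalences. I anticipate no genuine obstacle.
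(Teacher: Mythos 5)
Your proposal is correct and is essentially identical to the paper's own proof: the paper defines exactly the same composites $\Phi C_{pos}\s{X} = \Phi M C_{cat}\s{X} \xrightarrow{\Phi\pi} \Phi C_{cat}\s{X} \xrightarrow{\ve} \s{X}$ and $C_{pos}\Phi\s{C} = M C_{cat}\Phi\s{C} \xrightarrow{\pi} C_{cat}\Phi\s{C} \xrightarrow{\t{ev}} \s{C}$, citing Theorem~\ref{thm:MGeq} together with Propositions~\ref{prop:wePhiC} and~\ref{prop:weCPhi}. Your extra bookkeeping about the preorder-valued intermediate stages and the poset-valued endpoints is sound and merely makes explicit what the paper leaves implicit.
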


\begin{proof}There are equivalences
	\[
	\Phi C_{pos} \s{X} = \Phi MC_{cat} \s{X} \stackrel{\Phi \pi}{\longrightarrow} \Phi C_{cat} \s{X} \stackrel{\ve}{\longrightarrow} \s{X}
	\]
by Theorem \ref{thm:MGeq} and Proposition \ref{prop:wePhiC}, and there are equivalences 
	\[
	C_{pos}\Phi\s{X} = M C_{cat} \Phi\s{X} \stackrel{\pi}{\longrightarrow} C_{cat} \Phi\s{X} \stackrel{\t{ev}}{\longrightarrow} \s{X}
	\]
by Theorem \ref{thm:MGeq} and Proposition \ref{prop:weCPhi}.
\end{proof}

\end{document}